\numberwithin{equation}{section}
\newtheorem{theorem}{Theorem}[section]
\newtheorem{lemma}[theorem]{Lemma}
\newtheorem{corollary}[theorem]{Corollary}
\newtheorem{conjecture}[theorem]{Conjecture}
\theoremstyle{definition}
\newtheorem{example}[theorem]{Example}
\newtheorem{problem}[theorem]{Problem}
\newtheorem{remark}[theorem]{Remark}
\theoremstyle{remark}
\newenvironment{romenumerate}{\begin{enumerate}
 }{\end{enumerate}}
\newcounter{oldenumi}
\newenvironment{romenumerateq}
{\setcounter{oldenumi}{\value{enumi}}
\begin{romenumerate} \setcounter{enumi}{\value{oldenumi}}}
{\end{romenumerate}}
\newcounter{thmenumerate}
\newcounter{xenumerate}   
\newcommand{\refT}[1]{Theorem~\ref{#1}}
\newcommand{\refC}[1]{Corollary~\ref{#1}}
\newcommand{\refL}[1]{Lemma~\ref{#1}}
\newcommand{\refR}[1]{Remark~\ref{#1}}
\newcommand{\refS}[1]{Section~\ref{#1}}
\newcommand{\refP}[1]{Problem~\ref{#1}}
\newcommand{\refE}[1]{Example~\ref{#1}}
\newcommand{\refand}[2]{\ref{#1} and~\ref{#2}}
\newcommand\marginal[1]{\marginpar{\raggedright\parindent=0pt\tiny #1}}
\xdef\klockan{\the\count1.0\the\count255}
\xdef\klockan{\the\count1.\the\count255}\fi
\newcommand\nopf{\qed}   
\newcommand\set[1]{\ensuremath{\{#1\}}}
\newcommand\bigset[1]{\ensuremath{\bigl\{#1\bigr\}}}
\newcommand\bigpar[1]{\bigl(#1\bigr)}
\newcommand\Bigpar[1]{\Bigl(#1\Bigr)}
\def\rompar(#1){\textup(#1\textup)}    
\def\xexp(#1){e^{#1}}
\newcommand\ntoo{\ensuremath{{n\to\infty}}}
\newcommand\iid{i.i.d.\spacefactor=1000}    
\newcommand\ie{i.e.\spacefactor=1000}
\newcommand\eg{e.g.\spacefactor=1000}
\newcommand{\as}{a.s.\spacefactor=1000}
\newcommand{\aex}{a.e.\spacefactor=1000}
\newcommand{\tend}{\longrightarrow}
\newcommand\dto{\overset{\mathrm{d}}{\tend}}
\newcommand\pto{\overset{\mathrm{p}}{\tend}}
\newcommand\asto{\overset{\mathrm{a.s.}}{\tend}}
\newcommand\eqd{\overset{\mathrm{d}}{=}}
\newcommand\bbR{\mathbb R}
\newcommand\bbN{\mathbb N}  
\newcounter{CC}
\newcounter{cc}
\newcommand\E{\operatorname{\mathbb E{}}}
\renewcommand\P{\operatorname{\mathbb P{}}}
\newcommand\supp{\operatorname{supp}}
\newcommand\gd{\delta}
\newcommand\gf{\varphi}
\newcommand\gG{\Gamma}
\newcommand\gl{\lambda}
\newcommand\cE{\mathcal E}
\newcommand\cF{\mathcal F}
\newcommand\cG{\mathcal G}
\newcommand\cL{{\mathcal L}}
\newcommand\cP{\mathcal P}
\newcommand\cS{{\mathcal S}}
\newcommand\cU{{\mathcal U}}
\newcommand\ett[1]{\boldsymbol1[#1]} 
\def\[#1]{[\![#1]\!]}
\newcommand\qw{^{-1}}
\renewcommand{\=}{:=}
\newcommand\intoi{\int_0^1}
\newcommand\oi{[0,1]}
\newcommand\oiq{(0,1)}
\newcommand\dd{\,\textup{d}}
\newcommand{\tinj}{t_{\mathrm{inj}}}
\newcommand{\tind}{t_{\mathrm{ind}}}
\newcommand{\oii}{\oi^{2}}
\newcommand{\xiij}{\xi_{ij}}
\newcommand\xn{\ensuremath{[n]}}
\newcommand{\PUx}[1]{\cP_{#1}}
\newcommand{\cpq}{\overline{\cP}}
\newcommand{\cpoo}{\PUx{\infty}}
\newcommand{\cuoo}{\cU_{\infty}}
\newcommand{\cuq}{\overline{\cU}}
\newcommand\sC{\mathsf{C}}
\newcommand\bsC{\overline{\sC}}
\newcommand\sP{\mathsf{P}}
\newcommand\sK{\mathsf{K}}
\newcommand\sH{\mathsf{\mathbf2+\mathbf2}}
\newcommand\sL{\mathsf{\mathbf3+\mathbf1}}
\newcommand\ops{ordered probability space}
\newcommand\sfmuux{\ensuremath{(\cS,\cF,\allowbreak\mu,\prec)}}
\newcommand\sfmu{\ensuremath{(\cS,\cF,\mu)}}
\newcommand\oigl{\ensuremath{(\oi,\allowbreak\gl,<)}}
\newcommand\bbnn{\bbN\cup\set\infty}
\newcommand\csq{\cS^{|Q|}}
\newcommand\xQ{{|Q|}}
\newcommand\piw{\Pi_W}
\newcommand\pie{\Pi_0}
\newcommand\nn{[n]}
\newcommand\pnw{\ensuremath{P(n,W)}}
\newcommand\pnp{P(n,\Pi)}
\newcommand\pnpi{\ensuremath{P(n,\Pi)}}
\newcommand\sss{\cS}
\newcommand\precy{\prec^*}
\newcommand\precpnw{\prec_{\pnw}}
\newcommand\mul{\mu_{\mathrm L}}
\newcommand\mur{\mu_{\mathrm R}}
\newcommand\ints{\int_{\sss}}
\newcommand\wi{W_{\mathrm I}}
\newcommand\piwimu{\Pi_{\wi,\mu}}
\newcommand\pnwi{\ensuremath{P(n,\wi)}}
\newcommand\pnwimu{\ensuremath{P(n,\wi,\mu)}}
\newcommand\sssi{\sss_{\mathrm I}}
\newcommand\cgrc{\cG_{\mathrm{rc}}}
\newcommand\hmur[1]{h_{\mur}^{#1}}
\newcommand\bhmur[1]{\bh_{\mur}^{#1}}
\newcommand\Hmur[1]{H_{\mur}^{#1}}
\newcommand\bHmur[1]{\bH_{\mur}^{#1}}
\newcommand\Hmurx[1]{H_{\mur*}^{#1}}
\newcommand\bHmurx[1]{\bH_{\mur*}^{#1}}
\newcommand\td{\tilde d}
\newcommand\PP{\mathsf{P}}
\newcommand\PPL{\mathsf{P}_\mathrm{L}}
\newcommand\PPx{\PPL^*}
\newcommand\mux{\mu^*}
\newcommand\muxr{\mur^*}
\newcommand\oisupp[1]{\oiq\setminus\supp(#1)}
\newcommand\bh{\bar h}
\newcommand\bH{\bar H}
\newcommand\IO{\mathcal{IO}}
\newcommand\IG{\mathcal{IG}}
\newcommand\SO{\mathcal{SO}}
\newcommand\UIG{\mathcal{UIG}}
\newcommand\bIO{\overline{\IO}}
\newcommand\bIG{\overline{\IG}}
\newcommand\bSO{\overline{\SO}}
\newcommand\bUIG{\overline{\UIG}}
\newcommand\IOoo{\mathcal{IO}_\infty}
\newcommand\IGoo{\mathcal{IG}_\infty}
\newcommand\SOoo{\mathcal{SO}_\infty}
\newcommand\UIGoo{\mathcal{UIG}_\infty}
\newcommand\bPsi{\overline\Psi}
\newcommand\poi{\sP(\oi)}
\newcommand\refl{^\dag}
\newcommand\tW{\tilde W}
\newcommand{\Lovasz}{Lov\'asz}
\newcommand\REM[1]{{\raggedright\texttt{[#1]}\par\marginal{XXX}}}
\newcommand\citex[1]{\texttt{[#1]}}
\newcommand\refx[1]{\texttt{#1}}
\newcommand\urladdrx[1]{{\urladdr{\def~{{\tiny$\sim$}}#1}}}
\begin{document}
\title
{Limits of interval orders and semiorders}

\date{7 April 2011} 

\address{Department of Mathematics, Uppsala University, PO Box 480,
SE-751~06 Uppsala, Sweden}
\email{svante.janson@math.uu.se}
\urladdrx{http://www.math.uu.se/~svante/}

\subjclass[2000]{06A06}

\begin{abstract} 
We study poset limits given by sequences of finite interval orders or, as a
special case, finite semiorders. In the interval order case, we show that
every such limit can be represented by a probability measure on the space of 
closed subintervals of $\oi$, and we define a subset of such measures that
yield a unique representation. In the semiorder case, we similarly find
unique representations by a class of distribution functions.
\end{abstract}

\maketitle

\section{Introduction and main results}\label{Sintro}

The theory of graph limits was founded by
by \citet{LSz} and
\citet{BCLSV1,BCLSV2}, and further developed in a series of papers by
these and other 
authors.
An analogous theory for poset limits was initiated by
\citet{BrightwellG} and further developed by \citet{SJ224}.
The purpose of the present paper is to study the special cases of limits of
interval orders and semiorders. (Cf.\ the related study of interval
graph limits in \cite{SJ254}.) 

Definitions of these classes of posets and our main results are given in
Sections \refand{Sinterval}{Ssemi}, after some preliminaries.
We show there that every interval order limit can be represented by a
probability measure on the space $\set{[a,b]:0\le a\le b\le1}$
of closed subintervals of $\oi$, and that 
every semiorder limit can be represented by a weakly increasing function 
$g:\oi\to\oi$ such that $g(x)\ge x$.
Moreover, unlike most previously studied cases of similar representations of
graph limits or poset limits, in these two cases we find explicit classes of
such 
measures and functions that yield \emph{unique} representations. 
For semiorders, 
this leads to necessary and sufficient conditions for a sequence of
semiorders to converge to a semiorder limit; these conditions use the
distributions of the numbers of predecessors or successors of points in the
semiorders. 

In \refS{Sgraphs} we discuss the connections to graph limits, including some
open problems.

\section{Preliminaries}\label{Sprel}

We assume that the reader is familiar with the theory of graph limits and the
poset version of it in \cite{SJ224}. We use the same notations as there (see
also the graph case in \cite{SJ209});
for convenience, we repeat the main definitions. 

All posets 
are assumed to be non-empty. They
are usually finite, but we will sometimes use infinite posets as well.
If $(P,<)$ is a poset, we call $P$ its \emph{ground set}.
For simplicity, we  use the same notation for a poset and
its ground set when there is no danger of confusion. 
We let $\cP$ denote the set of unlabelled finite posets.

We may regard a poset $(P,<)$ as a digraph, with vertex set $P$ and a
directed edge $i\to j$ if and only if $i<j$ for all $i,j\in P$. 
(But note that not every digraph is a poset.)

The
functional $t(Q,P)$ is defined for finite posets $P$ and $Q$
as the proportion of all maps
$\gf:Q\to P$ that are poset homomorphisms,
\ie, such that $x<_Qy \implies \gf(x)<_P\gf(y)$.
We similarly also define $\tinj(Q,P)$ as
the proportion of all injective maps $Q\to P$ that are poset
homomorphisms 
and $\tind(Q,P)$ as
the proportion of all injective maps
$\gf:Q\to P$ such that $x<_Qy \iff \gf(x)<_P\gf(y)$
(\ie, $\gf$ is an isomorphism onto an induced subposet of $P$);
equivalently, $\tind(Q,P)$ is the probability that a random labelled induced
subposet of $|Q|$ points in $P$ is isomorphic to $Q$ (for any fixed
labelling of $Q$). (If $|Q|>|P|$, we define $\tinj(Q,P)=\tind(Q,P)=0$.)

We say that a sequence $(P_n)$ of
finite posets with $|P_n|\to\infty$ \emph{converges}, if $t(Q,P_n)$
converges for every finite poset $Q$.
(All unspecified limits in this paper are as  \ntoo.)
It is easy to see that this is equivalent to convergence of 
$\tinj(Q,P_n)$ for every $Q$, or of
$\tind(Q,P_n)$ for every  $Q$.

The (discrete) space $\cP$ of finite posets can be embedded as an open dense
subspace of a compact metric space $\cpq$, such that a sequence $P_n$ with
$|P_n|\to\infty$  converges in the sense just given if and only if it
converges in the metric space $\cpq$. The space $\cpoo\=\cpq\setminus\cP$ is
the space of \emph{poset limits}.

For each poset $Q$, the functionals $t(Q,\cdot)$, $\tinj(Q,\cdot)$ and
$\tind(Q,\cdot)$ extend to continuous functionals on $\cpq$. A poset
limit $\Pi$ is uniquely determined by the sequence of numbers
$\set{t(Q,\Pi)}_{Q\in\cP}$, and also by  $\set{\tind(Q,\Pi)}_{Q\in\cP}$.

  An \emph{\ops} $\sfmuux$ is a probability space $\sfmu$ equipped with a
  partial order $\prec$ such that $\set{(x,y):x\prec y}$ is a measurable subset
  of $\cS\times\cS$.

A \emph{(poset) kernel} on an \ops{} $\sfmuux$ is a measurable function
$W:\cS\times\cS\to\oi$ such that, for $x,y,z\in\cS$,
\begin{align}
  \label{w1}
W(x,y)>0&\implies x\prec y,\\
\label{w2}
W(x,y)>0 \text { and } W(y,z)>0 &\implies W(x,z)=1.
\end{align}


When convenient, we may omit parts of the notation that are clear from
the context and say, \eg, that $\sss$ or
$(\sss,\mu)$ is a probability space or an \ops.

For  $n\in\bbN\=\set{1,2,\dots}$,
let $[n]\=\set{1,\dots,n}$, 
and let $[\infty]\=\bbN$. Thus $[n]$ is a set of cardinality
$n$ for all $n\in\bbN\cup\set\infty$.

Given a kernel $W$ on an \ops{} \sfmuux, we define for every
$n\in\bbnn$ a random poset $\pnw$ of cardinality $n$ by taking a
sequence $(X_i)_{i=1}^\infty$ of \iid{} points in $\cS$ with
distribution $\mu$;
given $(X_i)$, we then define $\pnw$
to be $\xn$ with the random partial order $\precy$ 
such that $i\precy j$ with probability $W(X_i,X_j)$, with 
(conditionally) independent choices for different pairs $(i,j)$.
(A convenient construction is to take auxiliary independent random variables
$\xiij\sim U(0,1)$, $i,j\in\bbN$, and then define
$i\precy j$ if and only if $\xiij<W(X_i,X_j)$.)  
We also use the notation $P(n,W,\mu)$ or $P(n,\mu)$ when we wish to
emphasize the dependence on $\mu$.

\begin{example}\label{EW1}
In this paper we are mainly interested in the case when 
$W(x,y)=\ett{x\prec y}$ on some \ops{} $(\sss,\mu)$.
(We use $\ett{\cE}$ to denote the indicator function of the event
  $\cE$, which is 1 if $\cE$ occurs and 0 otherwise.)
In this case 
$i\precpnw j\iff  X_i\prec X_j$. 
In other words, $\pnw$ then is (apart from the labelling) just the
subset \set{X_1,\dots,X_n} of $\cS$ with the induced order, provided
$X_1,\dots,X_n$ are distinct (or we regard \set{X_1,\dots,X_n} as a
multiset). 
In this case we use also the notation $P(n,\sss)$.
\end{example}

One of the main results in \cite{SJ224} is the following representation theorem,
parallel to the result for graph limits by \citet{LSz}. 

\begin{theorem}
  \label{T1}
Every kernel $W$ on an \ops{} \sfmuux{} 
defines a poset limit \/ $\Pi_W\in\cpoo$ such that the following holds.
\begin{romenumerate}
  \item\label{T1a}
$\pnw\asto\Pi_W$ as \ntoo.
  \item\label{T1b}
For every poset $Q\in\cP $,
\begin{equation}\label{t1b}
t(Q,\piw)=
t(Q,W)\=
\int_{\csq} \prod_{ij:i<_Q j} W(x_i,x_j) \dd \mu(x_1)\dots\dd\mu(x_\xQ).  
\end{equation}
\end{romenumerate}  
Moreover, every poset limit $\Pi\in\cpoo$ can be represented in this
way, \ie, $\Pi=\piw$ for some kernel $W$ on an \ops{} \sfmuux.	
\end{theorem}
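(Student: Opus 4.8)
The plan is to treat the two assertions separately: first show that a given kernel $W$ produces an almost surely convergent sequence $\pnw$ with the stated homomorphism densities, and then establish surjectivity of the map $W\mapsto\piw$ by an exchangeability argument. For the forward direction I would start by computing the expectation of the injective density $\tinj(Q,\pnw)$. Conditioning on the sample points $(X_i)$, a fixed injective map $\gf\colon Q\to[n]$ is a poset homomorphism with conditional probability $\prod_{ij:i<_Q j}W(X_{\gf(i)},X_{\gf(j)})$, by the independence of the edge choices $\xiij$. Since $\gf$ is injective the points $X_{\gf(i)}$ are \iid{} with law $\mu$, so averaging over $(X_i)$ gives $\E\bigl[\tinj(Q,\pnw)\bigr]=t(Q,W)$, the integral in \eqref{t1b}, for every $n\ge\xQ$.

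The next step is concentration. Revealing the data $(X_i,\xiij)$ one vertex at a time yields a Doob martingale; changing the data attached to a single vertex alters at most $O(n^{\xQ-1})$ of the $n\fall{\xQ}$ injective maps, each term being bounded by $1$, so the corresponding bounded difference is $O(1/n)$. The Azuma--Hoeffding inequality then gives $\P\bigl(|\tinj(Q,\pnw)-t(Q,W)|>\eps\bigr)\le 2\exp(-c\eps^2 n)$, which is summable in $n$, so Borel--Cantelli yields $\tinj(Q,\pnw)\asto t(Q,W)$ for each fixed $Q$. As there are only countably many posets $Q$, I would intersect these almost sure events to obtain a single event on which $\tinj(Q,\pnw)\to t(Q,W)$ simultaneously for all $Q$. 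By the equivalence of the convergence notions recorded in \refS{Sprel} and the compactness of $\cpq$, the sequence $\pnw$ then converges almost surely to a limit $\piw\in\cpoo$, which by continuity of the functionals satisfies $t(Q,\piw)=t(Q,W)$ for every $Q$; this is exactly part~\ref{T1a} together with formula~\eqref{t1b}.

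For surjectivity, let $\Pi\in\cpoo$ be an arbitrary poset limit. The numbers $\set{\tind(Q,\Pi)}_Q$ specify, for each $n$, a probability distribution on posets on the labelled set $[n]$; these distributions are consistent under deletion of points and invariant under relabelling, so by Kolmogorov's extension theorem they assemble into an \exch{} random poset on $\bbN$, i.e.\ a random partial order $\precy$ whose law is invariant under finite permutations and whose finite restrictions have the prescribed distribution. The array $\bigl(\ett{i\precy j}\bigr)_{i,j\in\bbN}$ is then a $\setoi$-valued \exch{} array, and the Aldous--Hoover representation theorem provides \iid{} $\uoi$ variables $\xio$, $(\xii)_i$ and $(\xiij)_{i,j}$ together with a function expressing $\ett{i\precy j}$ through them.

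The main obstacle is the final step: converting this abstract Aldous--Hoover representation into a genuine kernel on an \ops{} \sfmuux{} satisfying \eqref{w1}--\eqref{w2}. One must build the latent ordered probability space and its partial order $\prec$ from the vertex variables $\xii$, define $W$ by integrating out the edge-level randomness $\xiij$, and then verify that the antisymmetry and transitivity forced on $\precy$ translate into the kernel axioms almost everywhere --- in particular that $W(x,y)>0$ implies $x\prec y$ and that the ``transitivity'' implication \eqref{w2} holds. Checking that these pointwise constraints are valid off a null set, and that the kernel so obtained reproduces $\Pi$ (which follows once one confirms that $P(\infty,W)$ has the same law as the \exch{} poset built from $\Pi$, so that $t(Q,\piw)=t(Q,W)$ matches the $\tind$-data of $\Pi$), is where the substantive work lies.
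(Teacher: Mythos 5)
First, a point of reference: the paper you are working from does not prove Theorem \ref{T1} at all --- it is quoted as ``one of the main results in \cite{SJ224}'' --- so the only meaningful comparison is with the proof in \cite{SJ224}. Your first half (a kernel $W$ yields a limit $\Pi_W$ with $\pnw\asto\Pi_W$ and $t(Q,\piw)=t(Q,W)$) is correct and is essentially the standard route taken there and in \cite{LSz}: compute $\E\,\tinj(Q,\pnw)=t(Q,W)$, concentrate via bounded differences and Azuma--Hoeffding, apply Borel--Cantelli, intersect over the countably many $Q$, and use compactness of $\cpq$ together with continuity of the extended functionals. One small repair: since $\xiij$ is shared between vertices $i$ and $j$, you should specify independent blocks for the bounded-difference argument (e.g.\ assign to vertex $i$ the variables $X_i$ and $\xiij,\xiji$ for $j<i$); the conclusion stands because any change to one block still affects only maps whose image contains $i$.

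The genuine gap is in the converse, and you have named it yourself. Kolmogorov extension plus Aldous--Hoover gives a representation $\ett{i\precy j}=f(\xio,\xii,\xij,\xiij)$ of the exchangeable random poset $P(\infty,\Pi)$, but this is generic exchangeability theory and is not yet the theorem: the entire content of the representation result is the passage from such a measurable $f$, about which one knows only that the induced random relation is a.s.\ a partial order, to (a) a probability space carrying a genuine partial order $\prec$ with $\set{(x,y):x\prec y}$ measurable, and (b) a kernel $W$ satisfying the \emph{pointwise} axioms \eqref{w1}--\eqref{w2} (not merely a.e.\ versions), such that sampling with conditionally independent coin flips reproduces the law of $P(\infty,\Pi)$. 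None of this is routine: one must reduce to the dissociated case by conditioning on $\xio$; show that a.s.\ antisymmetry and transitivity of $\precy$ force relations like \eqref{w2} on $W(x,y)=\P\bigpar{f(x,y,\xi)=1}$ off a null set; build $\prec$ on the latent space (e.g.\ from positivity of $W$, with transitive closure and modification on a null set) and verify it is a bona fide order; and finally check that replacing the possibly dependent pair $\bigpar{\ett{i\precy j},\ett{j\precy i}}$, which share the variable $\xiij$, by independent choices does not change the joint law --- this uses the order structure (at most one of $W(x,y)$, $W(y,x)$ is positive) and is not automatic. That is precisely the substantive core of the proof in \cite{SJ224}, and your proposal stops exactly where it begins; as written it establishes parts \ref{T1a}--\ref{T1b} but not the surjectivity claim.
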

We also use the notation $\Pi_{W,\mu}$ or $\Pi_\mu$ for $\piw$.
If $W$ is as in \refE{EW1}, we also write $\Pi_\sss$.

Unfortunately, the \ops{} and the kernel $W$ in \refT{T1} are not
unique (just as in the corresponding representation of graph
limits); see further \cite{BCL:unique,SJ224}.
Nevertheless, if $W$ and $W'$ are kernels on \ops{s} both representing the same
poset limit  $\Pi\in\cpoo$, then the random posets $\pnw$ and $P(n,W')$ have
the same distribution, for any $n\in\bbN\cup\set\infty$.
We may consequently define the random poset $\pnpi$ for a poset limit $\Pi$
such that $\pnpi\eqd\pnw$ for any kernel $W$ such that $\Pi_W=\Pi$.

It follows easily that
\begin{equation}
  \label{tqpind}
\P\bigpar{\pnp=Q} 
= \tind(Q,\Pi),
\end{equation}
for every (labelled) poset $Q$ on $\nn$,
and that 
the infinite random poset $P(\infty,\Pi)$ characterizes the poset limit
$\Pi$: 
$P(\infty,\Pi)\eqd P(\infty,\Pi') \iff \Pi=\Pi'$
 \cite[Theorem 1.16]{SJ224}.

If $\sss$ is a measurable space, we let $\PP(\sss)$ denote the space of
probability measures on $\sss$.
We denote the Lebesgue measure on $\oi$ by $\gl$. 

\section{Numbers of predecessors and successors}
Given a finite poset $P$ and a point $x\in P$, define 
\begin{align}\label{d+-}
d_-(x)\=|\set{y\in P:y<x}|
\quad\text{and}\quad 
d_+(x)\=|\set{y\in P:y>x}|; 
\end{align}
these are the
indegree and outdegree of $x$ in $P$ regarded as a digraph.
By taking $x$ to be a uniformly random point $X$ in $P$, we obtain random
variables $d_-(X)$ and $d_+(X)$; let $\nu_\pm=\nu_\pm(P)\in\poi$ be the
distributions of the normalized random variables $d_\pm(X)/|P|$.

For a kernel $W$ on an \ops{} $(\sss,\mu)$, we make the analogous
definitions
\begin{align}\label{W+-}
W_-(x)\=\ints W(y,x)\dd\mu(y)
\quad\text{and}\quad 
W_+(x)\=\ints W(x,y)\dd\mu(y),
\end{align}
and let $\nu_\pm(W)\in\poi$ be the distributions of the random variables
$W_\pm(X)$ 
where $X$ is a random point with distribution $\mu$.
In analogy with the degree distribution in the graph case 
\cite[Section 4]{SJ238}, we have the following continuity result; we equip
$\poi$ with the usual weak topology (\ie, convergence in distribution).

\begin{lemma}\label{Lnu+-}
The maps $\nu_\pm:\cP\to\poi$ extend (uniquely) to continuous maps
$\cpq\to\poi$.
Thus, for every  poset limit $\Pi$ there exist (unique) probability
	distributions $\nu_+(\Pi)$ and $\nu_-(\Pi)$ on $\oi$ such that:
  \begin{romenumerate}
\item \label{Lnu+-Pn}
 If $P_n\to \Pi$ for a sequence of posets  $P_n$, then
  $\nu_\pm(P_n)\to\nu_\pm(\Pi)$. 
\item \label{Lnu+-Pi}
The mappings $\Pi\mapsto\nu_\pm(\Pi)$ are continuous on $\cpoo$.
  \end{romenumerate}
Moreover,
\begin{romenumerateq}
\item \label{Lnu+-W}
If $W$ is a kernel representing $\Pi$, then $\nu_\pm(\Pi)=\nu_\pm(W)$.  
\end{romenumerateq}
\end{lemma}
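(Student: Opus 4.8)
The plan is to reduce the whole statement to the moments of the measures $\nu_\pm$, which turn out to be homomorphism densities of simple ``star'' posets. For $k\ge0$ let $Q^+_k$ be the poset on $\set{0,1,\dots,k}$ in which $0<i$ for $1\le i\le k$ and there are no other relations, and let $Q^-_k$ be its dual (one top element lying above $k$ mutually incomparable points). First I would record the basic moment identity. For a finite poset $P$ with $|P|=N$ and $X$ uniform on $P$, writing $d_+(X)=\sum_{y\in P}\ett{X<_Py}$, expanding the $k$-th power and averaging gives
\begin{equation*}
\int_\oi s^k\dd\nu_+(P)(s)=\E\Bigsqpar{\Bigpar{d_+(X)/N}^k}=\frac1{N^{k+1}}\sum_{x,y_1,\dots,y_k\in P}\prod_{i=1}^k\ett{x<_Py_i}=t(Q^+_k,P),
\end{equation*}
and symmetrically $\int s^k\dd\nu_-(P)=t(Q^-_k,P)$. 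The identical computation for a kernel $W$, expanding $W_+(x)=\ints W(x,y)\dd\mu(y)$ and comparing with \eqref{t1b}, yields $\int s^k\dd\nu_\pm(W)=t(Q^\pm_k,W)$; the finite-poset case is the special case of \refE{EW1}. It is exactly here that one must take care: because $t$ counts \emph{all} maps (with repetitions), the count reproduces $d_\pm(X)^k$ precisely, and one checks that $Q^\pm_k$ carries exactly the relations needed.

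Next I would invoke the fact that a probability measure on the compact interval $\oi$ is determined by its moments, and that (by Stone--Weierstrass, polynomials being dense in $C(\oi)$) weak convergence in $\poi$ is equivalent to convergence of all moments. Thus each moment map $m_k:\poi\to\oi$, $m_k(\rho)=\int s^k\dd\rho$, is continuous and the $m_k$ separate points; since $\poi$ is compact, $\rho\mapsto(m_k(\rho))_{k\ge0}$ is a homeomorphism of $\poi$ onto its image in $\prod_{k\ge0}\oi$. Equivalently, a map into $\poi$ is continuous exactly when all of its moment-coordinates are.

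The extension is then immediate. By hypothesis each $t(Q^\pm_k,\cdot)$ extends to a continuous functional on $\cpq$, so $\Phi_\pm\=\bigl(t(Q^\pm_k,\cdot)\bigr)_{k\ge0}:\cpq\to\prod_{k\ge0}\oi$ is continuous; on the dense subset $\cP$ its values are moment sequences of probability measures (namely of $\nu_\pm(P)$, by the identity above), and by continuity the same holds on all of $\cpq$. Composing $\Phi_\pm$ with the inverse of the moment homeomorphism defines a continuous map $\nu_\pm:\cpq\to\poi$ agreeing with the original maps on $\cP$; uniqueness of the extension follows from density of $\cP$ and the Hausdorff property of $\poi$. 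This gives the extension together with \ref{Lnu+-Pn} and \ref{Lnu+-Pi}, the latter being merely continuity restricted to $\cpoo$.

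Finally, for \ref{Lnu+-W}: if $W$ represents $\Pi$, then by \refT{T1}\ref{T1b} and the moment identity $m_k(\nu_\pm(W))=t(Q^\pm_k,W)=t(Q^\pm_k,\Pi)=m_k(\nu_\pm(\Pi))$ for every $k$, whence $\nu_\pm(W)=\nu_\pm(\Pi)$ since the moments determine the measure. The only genuinely computational point is the moment identity; everything after it is soft, resting on compactness of $\poi$ and the already-established continuity of the densities $t(Q,\cdot)$ on $\cpq$.
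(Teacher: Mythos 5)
Your proposal is correct and follows essentially the same route as the paper: the key identity $\E\bigpar{d_\pm(X)/|P|}^k=t(Q_k^\pm,P)$ (and its kernel analogue $\E W_\pm(X)^k=t(Q_k^\pm,W)$) for the star posets $Q_k^\pm$, followed by the method of moments for measures on $\oi$. The only difference is bookkeeping: the paper first defines $\nu_\pm(\Pi)$ via a representing kernel (establishing \ref{Lnu+-W} first) and then cites the method of moments for \ref{Lnu+-Pn} and \ref{Lnu+-Pi}, whereas you construct $\nu_\pm(\Pi)$ directly by extending the moment-sequence map and deduce \ref{Lnu+-W} afterwards — a reordering that just makes explicit the topological details the paper leaves implicit.
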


\begin{proof}
If $P$ is a poset and $X$ a uniform random point in $P$, then
\begin{equation}\label{ika}
  \E \bigpar{d_\pm(X)/|P|}^k =t(Q_k^\pm,P),
\end{equation}
where $Q_k^-$ is the poset with $k+1$ points of which one dominates everyone
else and the others are incomparable, and $Q_k^+$ is $Q_k^-$ with the
opposite order. (As digraphs, these are stars with
all $k$ edges directed to [from] the centre.) Similarly, if $W$ is a kernel on
$(\sss,\mu)$ and $X$ is a random point with distribution $\mu$, then
\begin{equation}\label{erika}
  \E W_\pm(X)^k =t(Q_k^\pm,W).
\end{equation}
Since these random variables are bounded, their distributions are determined
by their moments. If $W$ and $W'$ are two kernels representing $\Pi$, we
have $t(Q_k^\pm,W)=t(Q_k^\pm,W')=t(Q_k^\pm,\Pi)$, and thus \eqref{erika}
shows that $W_\pm(X)\eqd W'_\pm(X)$, \ie{}
$\nu_\pm(W)=\nu_\pm(W')$. Consequently, we may uniquely define
$\nu_\pm(\Pi)\=\nu_\pm(W)$ when $\Pi_W=\Pi$, which is \ref{Lnu+-W}.

\ref{Lnu+-Pn} and \ref{Lnu+-Pi}
follow by \eqref{ika}, \eqref{erika} and the method of moments.
\end{proof}

\begin{remark}\label{Rreflection}
  The operation $P\mapsto P\refl$ that reflects the order of $P$ extends by
  continuity to an involution $\cpq\to\cpq$. This operation interchanges
  $d_-$ and $d_+$ by \eqref{d+-}, and hence it interchanges $\nu_+$ and
  $\nu_-$ on $\cP$ and thus on $\cpq$, 
\ie, $\nu_\pm(\Pi\refl)=\nu_\mp(\Pi)$ for $\Pi\in\cpq$.
\end{remark}

\section{Interval orders}\label{Sinterval}

A (finite) poset has an \emph{interval order} 
if it is isomorphic to a set of intervals in $\bbR$ with $I\prec J$ if and only
if $x<y$ for all $x\in I$, $y\in J$
(i.e., $I$ lies entirely to the left of $J$). 
See \citet{Fishburn} for other characterizations.
We define an \emph{interval order limit} to be a poset limit that is a
limit of a sequence of finite posets with interval orders.
We denote the set of (unlabelled) finite interval orders by $\IO\subset\cP$,
its closure in $\cpq$ by $\bIO\subset\cpq$
and the set of interval order limits by 
$\IOoo\=\bIO\setminus\IO=\bIO\cap\cpoo$.

Let $\sssi\=\set{[x,y]:0\le x\le y\le 1}$ be the set
of closed intervals in \oi, with the order $I\prec J$ just defined;
we identify
$\sssi$ with the triangle $\set{(x,y):0\le x\le y\le 1}\subset\oii$ 
with the partial order
$(x_1,y_1)\prec(x_2,y_2)$ if $y_1<x_2$.
(We use both interpretations of $\sssi$ interchangeably below, for
notational convenience.) 

Any probability measure $\mu$ on $\sssi$
thus defines a distribution of random intervals.
Let $\wi$ be the kernel on $\sssi$ given by
$\wi(\mathbf x_1,\mathbf x_2)\=\ett{\mathbf x_1\prec\mathbf x_2}$.
Then, see \refE{EW1}, the random poset $\pnwi=\pnwimu$ is the poset
defined by $n$ random intervals (i.i.d.~with distribution $\mu$) with the
order above; 
thus $\pnwimu$ has an interval order.

We have the following representation theorem.
If $\mu$ is a measure on $\sssi$, its left [right] marginal $\mul$ [$\mur$]
is the distribution of the left [right]
endpoint of a random interval with distribution $\mu$;
thus $\mul$ [$\mur$] 
is the measure on $\oi$ obtained from $\mu$ by projecting $\sssi$ onto the
first [second] coordinate.

\begin{theorem}\label{Tinterval}
For every probability measure $\mu$ on $\sssi$,
the fixed kernel $\wi$ on the \ops{} $(\sssi,\mu,\prec)$ defines an interval
order limit $\Pi_\mu=\piwimu$.
Conversely, every interval order limit may be represented
in this way for some (non-unique) probability measure $\mu$ on  $\sssi$. 
We may further require either that the left marginal $\mul=\gl$,
or that the right marginal $\mur=\gl$.
\end{theorem}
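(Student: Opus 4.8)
The plan is to prove the two directions separately: the forward direction is immediate from \refT{T1}, while the converse rests on an empirical-measure/compactness argument in which the marginal normalisation is exactly what is needed to tame the discontinuity of $\wi$.

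\emph{Forward direction.} First I would verify that $\wi$ is a genuine kernel on the \ops{} $(\sssi,\mu,\prec)$, i.e.\ that \eqref{w1} and \eqref{w2} hold. Condition \eqref{w1} is automatic, since $\wi$ is $\set{0,1}$-valued and equals $1$ precisely when $\mathbf x_1\prec\mathbf x_2$. For \eqref{w2}, if $[a_1,b_1]\prec[a_2,b_2]$ and $[a_2,b_2]\prec[a_3,b_3]$ then $b_1<a_2\le b_2<a_3$, whence $b_1<a_3$ and $[a_1,b_1]\prec[a_3,b_3]$. Thus \refT{T1} applies and produces a poset limit $\Pi_\mu=\piwimu\in\cpoo$ with $\pnwimu\asto\Pi_\mu$. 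As each $\pnwimu$ has an interval order, $\Pi_\mu\in\bIO$, and being a limit it lies in $\bIO\cap\cpoo=\IOoo$.

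\emph{Converse.} Fix $\Pi\in\IOoo$ and finite interval orders $P_n\to\Pi$ with $|P_n|=k_n\to\infty$. I would realise each $P_n$ by intervals in $\oi$; after a harmless perturbation all $2k_n$ endpoints may be taken distinct and every interval non-degenerate, and after applying one common strictly increasing map to the endpoint axis (which preserves $\prec$ and hence the isomorphism type of $P_n$) the left endpoints may be taken to be $1/(k_n+1),\dots,k_n/(k_n+1)$. Let $\mu_n\in\PP(\sssi)$ be the empirical measure on these $k_n$ intervals and set $h_Q(\mathbf x_1,\dots,\mathbf x_{|Q|})\=\prod_{i<_Qj}\ett{\mathbf x_i\prec\mathbf x_j}$. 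Since the intervals realise $P_n$, a map $Q\to P_n$ is a homomorphism iff the corresponding intervals respect $\prec$; hence, by \eqref{t1b}, $t(Q,\Pi_{\mu_n})=\int h_Q\dd\mu_n^{\otimes|Q|}=t(Q,P_n)\to t(Q,\Pi)$ for every $Q$. By compactness of $\PP(\sssi)$ ($\sssi$ being compact metric) I pass to a subsequence with $\mu_{n'}\to\mu$ weakly; since the left marginals of $\mu_n$ converge to $\gl$ and projection is continuous, $\mul=\gl$.

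\emph{Closing the gap.} The step I expect to be the real obstacle is that $h_Q$ is discontinuous on the boundary set where some right endpoint meets some left endpoint, so weak convergence does not transfer $t(Q,\cdot)$ directly. I would resolve this by sandwiching $h_Q$ (open, with strict $y_i<x_j$, lower semicontinuous) between itself and its closed variant $h_Q^c$ (with $y_i\le x_j$, upper semicontinuous). Because the endpoints of $\mu_n$ are distinct and the intervals non-degenerate, $h_Q=h_Q^c$ on $\supp\mu_n^{\otimes|Q|}$, so both $\int h_Q\dd\mu_n^{\otimes|Q|}$ and $\int h_Q^c\dd\mu_n^{\otimes|Q|}$ equal $t(Q,P_n)\to t(Q,\Pi)$. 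The portmanteau theorem then gives $\int h_Q\dd\mu^{\otimes|Q|}\le t(Q,\Pi)\le\int h_Q^c\dd\mu^{\otimes|Q|}$. Finally, $\mul=\gl$ is non-atomic, so $\mu\otimes\mu\set{y_1=x_2}=\int\mul(\set{y(\mathbf u)})\dd\mu(\mathbf u)=0$ and therefore $\mu^{\otimes|Q|}$ assigns no mass to the boundary; the two bounds coincide and $t(Q,\wi)=\int h_Q\dd\mu^{\otimes|Q|}=t(Q,\Pi)$ for all $Q$. Since a poset limit is determined by the values $t(Q,\cdot)$, this yields $\Pi_\mu=\Pi$ with $\mul=\gl$. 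The case $\mur=\gl$ follows by applying the above to the reflected limit $\Pi\refl$ (\refR{Rreflection}) and reflecting the resulting measure by $[a,b]\mapsto[1-b,1-a]$, which reverses $\prec$ and swaps the two marginals.
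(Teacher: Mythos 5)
Your proof is correct and takes essentially the same route as the paper: the forward direction via Theorem~\ref{T1}, and the converse via empirical measures of interval representations with evenly spaced left endpoints, weak compactness of $\PP(\sssi)$, and a subsequence argument, with the $\mur=\gl$ case obtained by reflection. The only difference is one of presentation: where the paper invokes (citing \cite{SJ254}) the continuity of $\mu\mapsto\Pi_\mu$ at measures whose marginals have no common atom, you prove exactly that point directly, via the semicontinuous sandwich $h_Q\le h_Q^c$, the portmanteau theorem, and non-atomicity of $\mul=\gl$ — i.e., you supply the detail the paper deliberately omits.
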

We cannot have both $\mul=\gl$ and $\mur=\gl$ except in the
trivial case when $\mu$ is concentrated on the diagonal \set{(x,x)}, and
then the limit is the trivial poset limit $\pie$, 
for which $P(n,\pie)$ always is an anti-chain.

\begin{proof}
  The poset limit $\piwimu$ is a.s.\ the limit of the interval ordered posets
  $P(n,\wi,\mu)$ and is thus an interval order limit.

For the converse, we use 
the same arguments as for interval graph limits 
in \cite[Section 6]{SJ254} (recalling that
the complement of the comparability graph of an interval order is an
interval graph, see \refS{Sgraphs}). We therefore only sketch the argument
and omit some details:

If $\Pi$ is an interval order limit, there are posets $P_n$ with interval
orders and $P_n\to\Pi$. We may represent $P_n$ by intervals
$I_{ni}=[a_{ni},b_{ni}]\subseteq\oi$ 
such that the left endpoints $a_{ni}$ are evenly spaced: $a_{ni}=i/|P_n|$.
Let $\mu_n\in\sP(\sssi)$ be the empirical distribution
$|P_n|\qw\sum_i\gd_{(a_{ni},b_{ni})}$. By considering a subsequence, we may
assume that $\mu_n\to\mu$ for some $\mu\in\sP(\sssi)$.
It then follows that $\mul=\gl$ and $\Pi=\Pi_\mu$.
(Note that the mapping $\mu\mapsto\Pi_\mu$ is not continuous, but it is
continuous at every $\mu$ such that $\mul$ and $\mur$ do not have a common
atom, and thus in particular when $\mul=\gl$ as in our case.)
\end{proof}

Since $\wi$ is fixed, we thus represent interval order limits by 
measures $\mu\in\sP(\sssi)$. 

\begin{remark}
Although it is natural to represent an interval order limit
by the kernel
$\wi$ on $(\sssi,\mu)$ as in \refT{Tinterval}, 
it is shown in \cite[Example 9.5]{SJ224} that
any interval order limit can also be represented by a kernel on
$\oigl$. (In this case, the space is fixed and the kernel varies. Note also
that not every kernel on $\oi$ defines an interval order limit.)
\end{remark}

The representation in \refT{Tinterval} is not unique, but we can refine it
to a unique representation. 
For a measure $\nu$ on $\oi$, define the mappings $h_\nu^\pm:\oi\to\oi$
by
\begin{align}
  h_\nu^-(x)&\=\sup\set{z<x:z\in\supp(\nu)}, \label{h-}\\
  h_\nu^+(x)&\=\inf\set{z> x:z\in\supp(\nu)}, \label{h+}
\end{align}
with $\sup\emptyset\=0$ and $\inf\emptyset\=1$.
Write the open set $\oisupp\nu$ as a  union
$\bigcup_{k=1}^N(a_k,b_k)$ of disjoint open intervals (with $0\le N\le
\infty$); then
\begin{align} \label{h-ab}
  h_\nu^-(x)&=
  \begin{cases}
a_k, & \text{if $a_k< x\le b_k$ for some $k$},	\\
x, & \text{otherwise};
  \end{cases}
\\  h_\nu^+(x)&=  \label{h+ab}
  \begin{cases}
b_k, & \text{if $a_k\le x< b_k$ for some $k$},	\\
x, & \text{otherwise}.
  \end{cases}
\end{align}
Consequently,
\begin{equation}
  \label{h+-}
h_\nu^+\circ h_\nu^- = h_\nu^+.
\end{equation}
We also define
\begin{align}\label{bhab}
  \bh_\nu^+(x)&=
  \begin{cases}
b_k, & \text{if $a_k< x\le b_k$ for some $k$},	\\
x, & \text{otherwise},
  \end{cases}
\end{align}
noting that $\bh_\nu^+(x)=h_\nu^+(x-)$ for $x>0$ and that 
$\bh_\nu^+(x)=h_\nu^+(x)$ $\gl$-a.e.

Define the mappings $H_\nu^\pm,\bH_\nu^+:\oii\to\oii$ by
\begin{align}
H_\nu^\pm(x,y)\=\bigpar{h_\nu^\pm(x),y},
&&&
\bH_\nu^+(x,y)\=\bigpar{\bh_\nu^+(x),y}. 
\label{H+-}
\end{align}
Further, for a measurable map $\gf:\sss_1\to\sss_2$ and a measure $\mu$ on
$\sss_1$, let $\gf_*(\mu)$ be the induced measure on $\sss_2$.

\begin{lemma}\label{Lhmu}
  If $\mu$ is a probability measure on $\sssi$, then $\Hmur-:\sssi\to\sssi$
and $\bHmur+(x,y)\in\sssi$ for $\mu$-a.e.\ $(x,y)$.
Thus, $\Hmurx-(\mu)$ and $\bHmurx+(\mu)$ are probability measures on
$\sssi$.
If $\mul$ is continuous, then further $\Hmur+(x,y)=\bHmur+(x,y)\in\sssi$ for
$\mu$-\aex{} $(x,y)$, and thus $\Hmurx+(\mu)=\bHmurx+(\mu)\in\cP(\sssi)$.
\end{lemma}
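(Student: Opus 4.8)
The plan is to establish the four assertions in order, reducing each membership in $\sssi$ to an inequality between the transformed first coordinate and the unchanged second coordinate $y$. Throughout set $\nu=\mur$ and recall the explicit descriptions \eqref{h-ab}, \eqref{h+ab} and \eqref{bhab} in terms of the complementary intervals $\oisupp\nu=\bigcup_k(a_k,b_k)$.

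The assertion for $\Hmur-$ holds everywhere and is immediate: \eqref{h-ab} gives $h_\nu^-(x)\le x$, so for $(x,y)\in\sssi$ we have $0\le h_\nu^-(x)\le x\le y\le1$, whence $\Hmur-(x,y)\in\sssi$.

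The substantive point is the claim for $\bHmur+$, and this is where the main work lies. By \eqref{bhab} we have $\bh_\nu^+(x)\ge x$, so membership in $\sssi$ now requires $\bh_\nu^+(x)\le y$, which fails in general. The device is to observe that for $\mu$-a.e.\ $(x,y)$ the second coordinate satisfies $y\in\supp(\mur)=\supp(\nu)$: the law of $y$ under $\mu$ is precisely $\mur$, and a finite measure on the separable space $\oi$ gives full mass to its support. I would then prove the pointwise implication: if $y\in\supp(\nu)$ and $x\le y$, then $\bh_\nu^+(x)\le y$. Indeed, if $x$ lies in no interval $(a_k,b_k]$ then $\bh_\nu^+(x)=x\le y$; otherwise $a_k<x\le b_k$ for some $k$, so $\bh_\nu^+(x)=b_k$, and since $(a_k,b_k)$ is disjoint from $\supp(\nu)$ while $y\in\supp(\nu)$ and $y\ge x>a_k$, necessarily $y\ge b_k$. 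This yields $\bHmur+(x,y)\in\sssi$ for $\mu$-\aex{} $(x,y)$; the main obstacle is exactly this support argument tying the location of $y$ to the choice $\nu=\mur$.

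Since $h_\nu^-$ and $\bh_\nu^+$ are monotone and hence Borel, the maps $\Hmur-$ and $\bHmur+$ are measurable, so $\Hmurx-(\mu)$ and $\bHmurx+(\mu)$ are probability measures; the containments just proved show that they are carried by $\sssi$. For the last assertion I would compare $h_\nu^+$ with $\bh_\nu^+$: by \eqref{h+ab} and \eqref{bhab} these functions agree except at the left endpoints $\set{a_k}$, which form a countable set. Hence $\set{(x,y)\in\sssi:\Hmur+(x,y)\ne\bHmur+(x,y)}$ is contained in $\set{(x,y):x\in\set{a_k}}$, of $\mu$-measure $\sum_k\mul(\set{a_k})$; when $\mul$ is continuous this vanishes, so $\Hmur+=\bHmur+$ $\mu$-a.e., giving $\Hmur+(x,y)\in\sssi$ a.e.\ and $\Hmurx+(\mu)=\bHmurx+(\mu)\in\cP(\sssi)$.
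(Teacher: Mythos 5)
Your proof is correct and follows essentially the same route as the paper: the paper bounds the exceptional set $E=\set{(x,y):x\le y<\bhmur+(x)}$ by showing any such $(x,y)$ has $y\in\oisupp{\mur}$, a $\mur$-null set, which is precisely the contrapositive of your pointwise claim that $y\in\supp(\mur)$ and $x\le y$ force $\bh_{\mur}^+(x)\le y$; the treatments of $\Hmur-$ and of the continuous-$\mul$ case (agreement of $h_{\mur}^+$ and $\bh_{\mur}^+$ off the countable set of left endpoints) also match.
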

\begin{proof}
The result for $\Hmur-$ is obvious, since by \eqref{h-ab},
$0\le h_\nu^-(x)\le x$.  

For $\bHmur+$, let 
\begin{equation*}
 E\=\bigset{(x,y)\in\sssi:\bHmur+(x,y)\notin\sssi} 
=\bigset{(x,y): 0\le x\le y<\bhmur+(x)}.
\end{equation*}
Write, as above, 
$\oisupp{\mur}=\bigcup_k(a_k,b_k)$ with disjoint intervals $(a_k,b_k)$. 
Then $x\le y<\bhmur+(x)$ implies, by \eqref{bhab}, that
$a_k<x\le y<b_k$ for some $k$; in particular,
$y\in(a_k,b_k)\subseteq\oisupp{\mur}$. Consequently,
$\mu(E)\le \mur(\oisupp{\mur})=0$.

Finally, if $\mul$ is continuous, then $\bh_\nu^+=h_\nu^+$ $\mul$-a.e., for
any $\nu$, and thus $\bH_\nu^+=H_\nu^+$ $\mu$-\aex{}; we choose $\nu=\mur$.
\end{proof}

\begin{lemma}\label{Lmu+-}
  Let $\mu$ be a probability measure on $\sssi$, and let
  $\mu_+=\bHmurx+(\mu)$. 
Then $\Pi_{\mu_+}=\Pi_\mu$.
\end{lemma}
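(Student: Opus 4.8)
The plan is to verify that $\Pi_{\mu_+}$ and $\Pi_\mu$ have identical homomorphism densities $t(Q,\cdot)$ for every finite poset $Q$; since a poset limit is uniquely determined by these numbers (\refS{Sprel}), this yields $\Pi_{\mu_+}=\Pi_\mu$. By \refL{Lhmu} the pushforward $\mu_+=\bHmurx+(\mu)$ is a genuine probability measure on $\sssi$, so $\Pi_{\mu_+}$ is defined. Applying \refT{T1}\ref{T1b} and the fact that $\mu_+^{\otimes\xQ}$ is the pushforward of $\mu^{\otimes\xQ}$ under the product of copies of $\bHmur+$, the change-of-variables formula gives
\begin{equation*}
t(Q,\Pi_\mu)=\int_{\sssi^{\xQ}}\prod_{ij:i<_Qj}\wi(\mathbf x_i,\mathbf x_j)\dd\mu(\mathbf x_1)\cdots\dd\mu(\mathbf x_\xQ)
\end{equation*}
and
\begin{equation*}
t(Q,\Pi_{\mu_+})=\int_{\sssi^{\xQ}}\prod_{ij:i<_Qj}\wi\bigpar{\bHmur+(\mathbf x_i),\bHmur+(\mathbf x_j)}\dd\mu(\mathbf x_1)\cdots\dd\mu(\mathbf x_\xQ).
\end{equation*}
It therefore suffices to show that these two integrands coincide for $\mu^{\otimes\xQ}$-almost every $(\mathbf x_1,\dots,\mathbf x_\xQ)$.

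Write $\mathbf x_i=(x_i,y_i)$. Since $\wi((x_i,y_i),(x_j,y_j))=\ett{y_i<x_j}$ and $\bHmur+$ fixes the right endpoint while sending the left endpoint $x_j$ to $\bhmur+(x_j)$, a single factor transforms as $\ett{y_i<x_j}\mapsto\ett{y_i<\bhmur+(x_j)}$. As $\bhmur+(x_j)\ge x_j$, these two indicators can disagree only when $x_j\le y_i<\bhmur+(x_j)$; by \eqref{bhab} this forces $x_j\in(a_k,b_k)$ and $y_i\in[x_j,b_k)\subseteq(a_k,b_k)$ for some gap $(a_k,b_k)$ of $\oisupp{\mur}$, and in particular $y_i\in\oisupp{\mur}$.

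Now the essential point: under $\mu$ the right endpoint $y_i$ of the $i$th interval has distribution $\mur$, so the event $\{y_i\in\oisupp{\mur}\}$ has $\mu^{\otimes\xQ}$-probability $\mur(\oisupp{\mur})=0$. Taking the union over the finitely many indices $i$ appearing in the factors, the set on which some factor of the two products differs is $\mu^{\otimes\xQ}$-null; off this set the products agree, so the two integrals are equal. Hence $t(Q,\Pi_{\mu_+})=t(Q,\Pi_\mu)$ for every $Q$, and therefore $\Pi_{\mu_+}=\Pi_\mu$. The only place demanding care is the factor-by-factor analysis above, confirming that any discrepancy genuinely drives the relevant right endpoint into the $\mur$-null set $\oisupp{\mur}$; granting that, the argument closes at once.
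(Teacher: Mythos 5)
Your proof is correct, and its engine is the same null-set observation that powers the paper's proof: the factors $\ett{y_i<x_j}$ and $\ett{y_i<\bhmur+(x_j)}$ can disagree only if the right endpoint $y_i$ lands in a gap $(a_k,b_k)$ of $\supp(\mur)$, an event of $\mur$-probability zero. Where you diverge from the paper is in the machinery that turns this almost-everywhere agreement into $\Pi_{\mu_+}=\Pi_\mu$. The paper argues via the infinite random poset: it couples $P(\infty,\mu)$ and $P(\infty,\mu_+)$ on one probability space, using intervals $I_i=[L_i,R_i]$ with distribution $\mu$ and $I_i'=[\bhmur+(L_i),R_i]$ with distribution $\mu_+$ built from the same randomness, observes that a.s.\ $R_i\in\supp(\mur)$ for all $i$ simultaneously so that the two infinite posets coincide exactly, and then invokes the characterization $P(\infty,\Pi)\eqd P(\infty,\Pi')\iff\Pi=\Pi'$ from \cite[Theorem 1.16]{SJ224}. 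You instead compare homomorphism densities: \refT{T1}\ref{T1b} together with the pushforward change of variables reduces $t(Q,\Pi_{\mu_+})=t(Q,\Pi_\mu)$ to a factor-by-factor comparison of integrands on $\sssi^\xQ$, and you finish with the determinacy of a poset limit by $\set{t(Q,\Pi)}_{Q\in\cP}$. Both routes are complete and of comparable length; yours is somewhat more elementary in that it needs only finite product measures and the integral formula \eqref{t1b}, never touching the infinite-poset apparatus, while the paper's coupling disposes of all $Q$ (indeed of the whole of $P(\infty,\cdot)$) in a single a.s.\ event and without writing any integrals. You were also right to lean on \refL{Lhmu} before pushing forward: $\bHmur+$ maps $\sssi$ into $\sssi$ only $\mu$-a.e., so the pushforward and the change-of-variables step are legitimate precisely after discarding that null set.
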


Note that in general, the result does not hold for
$\Hmur\pm$; this is the reason for introducing $\bHmur+$. However, we are
mainly interested in the case $\mul=\gl$, and then \refL{Lhmu} shows that we
can use $\Hmur+$ instead of $\bHmur+$.

\begin{proof}
  Let, as above 
$\oisupp{\mur}=\bigcup_k(a_k,b_k)$ with disjoint intervals $(a_k,b_k)$. 

  Consider the infinite random poset $P(\infty,\mu)=
P(\infty,\wi,\mu)$. By \refE{EW1},
  this poset is constructed by taking \iid{} random intervals
  $I_i=[L_i,R_i]$ with distribution $\mu$; 
$P(\infty,\mu)$ then is $\bbN$ with the order $i\prec j\iff R_i<L_j$
  (\ie, $I_i\prec I_j$).
Further, $P(\infty,\mu_+)$ is defined similarly using the intervals 
  $I'_i=[\bhmur+(L_i),R_i]$, which have distribution $\mu_+$. 

For every $i$ and $j$, $I_i\prec I_j\implies I_i'\prec I_j'$, and the
converse holds too except in the case
$L_j\le R_i<\bhmur+(L_j)$. By \eqref{bhab}, if this exceptional case holds,
then, for some $k$,
$a_k<L_j\le R_i<b_k$; hence $R_i\in(a_k,b_k)\subseteq\oisupp{\mur}$.
However, \as{} $R_i\in\supp(\mur)$ for all $i$, and thus
$I_i\prec I_j\iff I_i'\prec I_j'$ for all $i,j$, so
$P(\infty,\mu_+)=P(\infty,\mu)$. In other words, 
$P(\infty,\Pi_{\mu_+})\eqd P(\infty,\Pi_{\mu})$, which is equivalent to
$\Pi_{\mu_+}=\Pi_{\mu}$, see \cite{SJ224}.
\end{proof}

Let $\PPL(\sssi)\=\set{\mu\in\PP(\sssi):\mul=\gl}$.
We choose to consider only representations by measures $\mu\in\PPL(\sssi)$
in \refT{Tinterval}; the theorem then says that
$\mu\mapsto\Pi_\mu$ maps $\PPL(\sssi)$ onto $\IOoo$.
We have the following characterisation of
when two measures in $\PPL(\sssi)$ represent the same poset limit.

\begin{theorem}\label{Tequiv}
  Let $\mu$ and $\mu'$ be two measures on $\sssi$ such that
  $\mul=\mul'=\gl$.
Then the following are equivalent:
\begin{romenumerate}
\item \label{=Pi}
$\Pi_\mu=\Pi_{\mu'}$,
\item \label{=H-}
$\Hmurx-(\mu)=H^-_{\mu_R'*}(\mu')$
\item \label{=H+}
$\Hmurx+(\mu)=H^+_{\mu_R'*}(\mu')$
\item \label{=mur,H-}
$\mur=\mur'$ and $\Hmurx-(\mu)=\Hmurx-(\mu')$,
\item \label{=mur,H+}
$\mur=\mur'$ and $\Hmurx+(\mu)=\Hmurx+(\mu')$.
\end{romenumerate}

In particular, if 
$\mul=\mul'=\gl$ and
$\supp(\mur)=\oi$, 
then $\Pi_\mu=\Pi_{\mu'}\iff \mu=\mu'$.
\end{theorem}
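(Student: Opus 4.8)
The plan is to prove the ``easy'' implication $\ref{=H+}\Rightarrow\ref{=Pi}$ directly from \refL{Lmu+-}, to obtain all the equivalences not involving $\ref{=Pi}$ by soft manipulations of the maps $h_{\mur}^{\pm}$, and to concentrate the real work on the single implication $\ref{=Pi}\Rightarrow\ref{=H+}$, which amounts to reconstructing the reduced measure $\Hmurx+(\mu)$ from the limit $\Pi_\mu$.

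For the soft part I would first note that $H_{\mur}^{\pm}$ changes only the left-endpoint coordinate, so $\Hmurx\pm(\mu)$ has right marginal $\mur$; hence an equality $\Hmurx-(\mu)=H^-_{\mu_R'*}(\mu')$ already forces $\mur=\mur'$, whereupon $h_{\mur}^{-}=h_{\mu_R'}^{-}$ and the two maps agree, giving $\ref{=H-}\iff\ref{=mur,H-}$ and, identically, $\ref{=H+}\iff\ref{=mur,H+}$. To pass between the ``$-$'' and ``$+$'' versions (now with $\mur=\mur'$ fixed) I would use the pointwise identity $h_{\mur}^{+}\circ h_{\mur}^{-}=h_{\mur}^{+}$ of \eqref{h+-}, which yields $\Hmurx+(\Hmurx-(\mu))=\Hmurx+(\mu)$ and hence $\ref{=mur,H-}\Rightarrow\ref{=mur,H+}$; for the reverse I would use the companion identity $h_{\mur}^{-}\circ h_{\mur}^{+}=h_{\mur}^{-}$, which holds off the countable set of gap endpoints and therefore $\gl$-a.e., so that (because $\mul=\gl$) $\Hmurx-(\Hmurx+(\mu))=\Hmurx-(\mu)$ as measures.

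The implication $\ref{=H+}\Rightarrow\ref{=Pi}$ is then immediate: reading off right marginals gives $\mur=\mur'$, and since $\mul=\gl$ is continuous \refL{Lhmu} lets me replace $\bHmur+$ by $\Hmur+$ in \refL{Lmu+-}, so $\Pi_\mu=\Pi_{\Hmurx+(\mu)}=\Pi_{\Hmurx+(\mu')}=\Pi_{\mu'}$. The substance is $\ref{=Pi}\Rightarrow\ref{=H+}$. Here I would first recover $\mur$ from $\Pi$: for the kernel $\wi$ on $(\sssi,\mu)$ one has $W_+([x,y])=\mul((y,1])=1-y$, so by \refL{Lnu+-}\ref{Lnu+-W} the law $\nu_+(\Pi)$ is the law of $1-R$ with $R\sim\mur$, and $\Pi_\mu=\Pi_{\mu'}$ forces $\mur=\mur'$.

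It remains to reconstruct the whole reduced measure. Writing $q(x)\=\mur([0,x))$, a uniformly random point of $P(\infty,\mu)$ has normalized down- and up-degrees tending a.s.\ to $q(L)$ and to $1-R$; the joint law of $(q(L),1-R)$ is determined by its mixed moments, which equal the homomorphism densities $t(Q_{k,l},\Pi)$ (with $Q_{k,l}$ the poset consisting of a centre together with $k$ predecessors and $l$ successors, otherwise incomparable), so by the method of moments it is a functional of $\Pi$ alone, exactly as in \refL{Lnu+-}. I would then show that $\gl$-a.e.\ one has $h_{\mur}^{+}(x)=\sup\set{z:q(z)=q(x)}$, a function of $q(x)$ and of $\mur$; pushing $\mu$ forward, $\Hmurx+(\mu)$ is the law of $(h_{\mur}^{+}(L),R)$ and is therefore determined by the (reconstructible) joint law of $(q(L),R)$ together with $\mur$. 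Thus $\Pi_\mu=\Pi_{\mu'}$ forces $\Hmurx+(\mu)=H^+_{\mu_R'*}(\mu')$, completing the cycle. I expect this last identity to be the main obstacle: it requires checking that $q$ is constant across each gap $(a_k,b_k)$ and strictly increasing immediately to the right of $b_k$ (since $b_k\in\supp(\mur)$), with the only exceptions occurring at left gap-endpoints carrying an atom, a countable and hence $\gl$-null set that is harmless because $\mul=\gl$. Finally, for the concluding assertion, $\supp(\mur)=\oi$ means there are no gaps, so $h_{\mur}^{+}=\mathrm{id}$ and $\Hmurx+(\mu)=\mu$; since $\Pi_\mu=\Pi_{\mu'}$ also gives $\mur=\mur'$ and hence $\supp(\mu_R')=\oi$ and $H^+_{\mu_R'*}(\mu')=\mu'$, the equivalence $\ref{=Pi}\iff\ref{=H+}$ reduces to $\Pi_\mu=\Pi_{\mu'}\iff\mu=\mu'$.
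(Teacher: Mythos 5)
Your proof is correct, but its core is genuinely different from the paper's. The soft equivalences are handled the same way (right marginals give \ref{=H-}$\iff$\ref{=mur,H-} and \ref{=H+}$\iff$\ref{=mur,H+}; the identity $h_{\mur}^+\circ h_{\mur}^-=h_{\mur}^+$ gives \ref{=mur,H-}$\implies$\ref{=mur,H+}), and your \ref{=H+}$\implies$\ref{=Pi} is exactly the paper's appeal to Lemmas \ref{Lhmu} and \ref{Lmu+-}. The difference is in the hard implication out of \ref{=Pi}: the paper proves \ref{=Pi}$\implies$\ref{=H-} by working inside the infinite random poset $P(\infty,\mu)$, using the up-degree counts $d_{n+}(i)/n\to 1-R_i$ together with a cleverly \emph{modified} down-degree $\td_{n-}(i)$ (counting $j$ for which some $k\neq j$ satisfies $k\prec i$, $k\not\prec j$) whose normalized limit is $\hmur-(L_i)$ directly; the empirical measures of the resulting pairs $Y_i$ then recover $\Hmurx-(\mu)$ as an a.s.\ functional of the exchangeable infinite poset. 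You instead recover the joint law of $(W_-(X),W_+(X))=(q(L),1-R)$, $q(x)=\mur[0,x)$, from the homomorphism densities $t(Q_{k,l},\Pi)$ of double stars plus the multivariate method of moments (a genuine two-dimensional extension of Lemma \ref{Lnu+-}), and then convert deterministically via the $\gl$-a.e.\ identity $\hmur+(x)=\sup\set{z:q(z)=q(x)}$; so where the paper builds the gap-collapsing into the combinatorial observable $\td_{n-}$, you do the collapsing afterwards as a measurable, $\mur$-determined transformation. Both routes are sound; yours avoids the law-of-large-numbers machinery for $P(\infty,\mu)$ at the cost of the moment argument, the a.e.\ identification of $\hmur+$ through $q$ (whose exceptional set, left gap-endpoints carrying an atom of $\mur$, you correctly identify and dispose of using $\mul=\gl$), and the extra reverse implication \ref{=mur,H+}$\implies$\ref{=mur,H-} via $h_{\mur}^-\circ h_{\mur}^+=h_{\mur}^-$ $\gl$-a.e., which the paper's cycle never needs. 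Two small points to fix in a write-up: $Q_{k,l}$ with predecessors and successors ``otherwise incomparable'' is not a poset when $k,l\ge1$ --- you must take the transitive closure, with each predecessor below each successor; the identity $t(Q_{k,l},W)=\E\bigsqpar{W_-(X)^kW_+(X)^l}$ still holds because kernel property \eqref{w2} makes the cross factors $W(u_i,v_j)$ redundant. Also note explicitly that $t\mapsto\sup\set{z:q(z)=t}$ is monotone on the range of $q$, hence Borel, so the pushforward step is legitimate.
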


\begin{proof}
\ref{=Pi}$\implies$\ref{=H-}.
  Consider again the infinite random poset $P(\infty,\mu)$
constructed by \iid{} random intervals   $I_i=[L_i,R_i]$ with
distribution $\mu$; thus $P(\infty,\mu)$ has the order $i\prec j\iff R_i<L_j$.
Define, for $n,i\in\bbN$,
\begin{align}
  d_{n+}(i)&\=|\set{j\le n:j\succ i}|=|\set{j\le n: L_j>R_i}|, \\
  \td_{n-}(i)&\=|\set{j\le n:
 \exists k\neq j \text{ with } k\prec i \text{ and }k\not\prec j}| \notag
\\&\phantom: \label{tdn-}
=|\set{j\le n: \exists k\neq j: L_j\le R_k< L_i}|.
\end{align}
By the law of large numbers, as \ntoo, \as{} for every $i$,
\begin{equation}\label{tdn+lim}
 \frac{ d_{n+}(i)}n
\to \mul(R_i,1]=1-R_i.
\end{equation}
Similarly, \as{} for every $i$ and $j$, 
\begin{equation}\label{gabriel}
\exists \text{ $k\neq i,j$ with }
L_j\le R_k<L_i \iff \mur[L_j,L_i)>0. 
\end{equation}
Further, for all $x,y\in\oi$,
\begin{equation}\label{magnus}
\mur(y,x)=0\iff(y,x)\cap\supp(\mur)=\emptyset\iff \hmur-(x)\le y.  
\end{equation}
Also, since $L_j$ has the distribution $\mul=\gl$ which is continuous,
$\mur[L_j,L_i)>0\iff \mur(L_j,L_i)>0$ a.s.
It follows from \eqref{tdn-} and \eqref{gabriel}--\eqref{magnus} that \as{}
\begin{equation}
  \td_{n-}(i)=|\set{j\le n: \mur(L_j, L_i)>0}|
=|\set{j\le n: L_j<\hmur-( L_i)}|.
\end{equation}
The law of large numbers now shows that, in analogy with \eqref{tdn+lim}, \as{}
\begin{equation}\label{tdn-lim}
 \frac{\td_{n-}(i)}n
\to \mul[0,\hmur-(L_i))=\hmur-(L_i).
\end{equation}
Define 
\begin{equation}
  Y_i\=\Bigpar{\limsup_\ntoo  \frac{\td_{n-}(i)}n,
1-\limsup_\ntoo  \frac{d_{n+}(i)}n} \in\oii.
\end{equation}
We have shown in \eqref{tdn+lim} and \eqref{tdn-lim} that \as{}
$Y_i=\Hmur-(L_i,R_i)$ for every $i$. By the law of large numbers again, for
every continuous function $f$ on $\oii$, \as
\begin{equation}
  \frac1n\sum_{i=1}^nf(Y_i)
\to \E f\bigpar{\Hmur-(L_1,R_1)}
=\int f\dd \Hmurx-(\mu).
\end{equation}
Consequently, the measurable functional $ \limsup n\qw\sum_{i=1}^nf(Y_i)$
of $P(n,\mu)=\pnwimu$ is a.s.\ equal to $\int f\dd \Hmurx-(\mu)$.

The same applies to $\mu'$. If \ref{=Pi} holds so $\Pi_\mu=\Pi_{\mu'}$,
then $P(n,\mu)\eqd P(n,\mu')$, and it follows that 
$\int f\dd \Hmurx-(\mu)=\int f\dd H^-_{\mu_R'*}(\mu')$ for every continuous
$f$, and thus \ref{=H-} holds.

\ref{=H-}$\implies$\ref{=mur,H-}.
By \eqref{H+-},
$\Hmurx-(\mu)$ has the same right marginal as $\mu$, \ie{} $\mur$. Hence, 
\ref{=H-} implies $\mur=\mur'$ and \ref{=mur,H-} follows.

\ref{=H+}$\implies$\ref{=mur,H+}. Similar.

\ref{=mur,H-}$\implies$\ref{=H-} and \ref{=mur,H+}$\implies$\ref{=H+}. Trivial.

\ref{=mur,H-}$\implies$\ref{=mur,H+}.
Apply $\Hmurx+$ to both sides, noting that 
$\Hmur+\circ \Hmur-=\Hmur+$ by \eqref{H+-} and \eqref{h+-}.

\ref{=mur,H+}$\implies$\ref{=Pi}.
Immediate by \refL{Lmu+-}, since \refL{Lhmu} shows that we can use $\bHmur+$
instead of $\Hmur+$.

Finally, if $\supp(\mur)=\oi$, then $\hmur-(x)=x$ and $\Hmur-(x,y)=(x,y)$, so 
$\Hmurx-(\mu)=\mu$ and $\Hmurx-(\mu')=\mu'$; 
hence \ref{=mur,H-} implies $\mu=\mu'$.
\end{proof}

Let $\PPx(\sssi)$ be the set of $\mu\in\PPL(\sssi)$ such that if $(a,b)$ is
an open subinterval of $\oiq$ with $\mur(a,b)=0$, then the restriction of
$\mu$ to $(a,b)\times\oi$ is a product measure $\gl\times \nu_{a,b}$ for
some measure $\nu_{a,b}$ (necessarily supported on $[b,1]$).

If $\mu\in\PPL(\sssi)$ and $\oisupp{\mur}=\bigcup_k(a_k,b_k)$, with
$(a_k,b_k)$ disjoint, define the measures $\nu_k$ on $\oi$ by
$\nu_k(A)\=\mu\bigpar{(a_k,b_k)\times A}/(b_k-a_k)$,
and let $\mux$ be the measure on $\oii$ that equals $\mu$ on
$\supp(\mur)\times\oi$ and $\gl\times\nu_k$ on $(a_k,b_k)\times\oi$.

\begin{lemma}
  \label{Lmux}
Let $\mu\in\PPL(\sssi)$.
Then $\mux\in\PPx(\sssi)$ and $\mu\mapsto\mux$ is a projection
onto $\PPx(\sssi)$, \ie, $\mux=\mu$ if $\mu\in\PPx(\sssi)$.
 Further, $\muxr=\mur$ and
$\Hmurx\pm(\mu)=\Hmurx\pm(\mux)$.
Moreover, if $\mu,\mu'\in\PPL(\sssi)$, then 
$\Hmurx\pm(\mu)=H^\pm_{\mur'*}(\mu')\iff \mux=(\mu')^*$. 
\end{lemma}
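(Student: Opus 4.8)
The plan is to prove the five assertions in order, throughout using that $\mux$ agrees with $\mu$ on $\supp(\mur)\times\oi$ and equals the product $\gl\times\nu_k$ on each strip $(a_k,b_k)\times\oi$ over the gaps $\oisupp{\mur}=\bigcup_k(a_k,b_k)$. First I would compute the marginals of $\mux$. Each $\nu_k$ is a probability measure, since $\nu_k(\oi)=\mul(a_k,b_k)/(b_k-a_k)=1$ as $\mul=\gl$; hence on every strip $\gl\times\nu_k$ has first marginal $\gl\restx{(a_k,b_k)}$, and summing this with the contribution of $\mu=\mux$ on $\supp(\mur)\times\oi$ recovers left marginal $\gl$, so $\mux\in\PPL$ once we check it lives on $\sssi$. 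For the right marginal, $(\gl\times\nu_k)(\oi\times A)=(b_k-a_k)\nu_k(A)=\mu((a_k,b_k)\times A)$, so $\mux$ and $\mu$ assign equal mass to $\oi\times A$ strip by strip and coincide on $\supp(\mur)\times\oi$; thus $\muxr=\mur$. Finally, since $\mu$ lives on $\sssi$ and $\mur(a_k,b_k)=0$, a $\mu$-typical point of the strip has $a_k<x\le y$ with $y\notin(a_k,b_k)$, forcing $y\ge b_k$; hence $\nu_k$ is supported on $[b_k,1]$, $\gl\times\nu_k$ lives on $\set{x<b_k\le y}\subseteq\sssi$, and $\mux\in\PPL(\sssi)$.

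Membership $\mux\in\PPx(\sssi)$ is then quick: if $(a,b)\subseteq\oiq$ is open with $\muxr(a,b)=\mur(a,b)=0$, then $(a,b)\subseteq\oisupp{\mur}$ and, being connected, $(a,b)\subseteq(a_k,b_k)$ for a single $k$; there $\mux=\gl\times\nu_k$ restricts to the product $\gl\times\nu_k$ on $(a,b)\times\oi$, as required. The projection property is equally direct: if $\mu\in\PPx(\sssi)$, its defining product structure on each gap reads $\mu\restx{(a_k,b_k)\times\oi}=\gl\times\nu_{a_k,b_k}$, and the formula defining $\nu_k$ gives $\nu_{a_k,b_k}=\nu_k$, so $\gl\times\nu_k=\mu$ on every strip and $\mux=\mu$. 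Consequently $\mu\mapsto\mux$ has image exactly $\PPx(\sssi)$ and fixes it pointwise.

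For $\Hmurx\pm(\mu)=\Hmurx\pm(\mux)$ the key observation is that, by \eqref{h-ab}--\eqref{h+ab}, $H^-_{\mur}$ (resp.\ $H^+_{\mur}$) sends each strip $(a_k,b_k)\times\oi$ to $\set{a_k}\times\oi$ (resp.\ $\set{b_k}\times\oi$) by collapsing the first coordinate to the constant $a_k$ (resp.\ $b_k$) and fixing the second, while acting as the identity on $\supp(\mur)\times\oi$ off the countable set $\set{b_k}$ (resp.\ $\set{a_k}$), which carries no mass since $\mul=\gl$ is continuous (and by \refL{Lhmu} these maps send $\sssi$ into $\sssi$ $\mu$- and $\mux$-a.e.). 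Hence the pushforward of either measure restricted to a strip is $\gd_{a_k}$ (resp.\ $\gd_{b_k}$) times its right marginal on that strip, and by the first paragraph these right marginals agree for $\mu$ and $\mux$; since the two measures already coincide on $\supp(\mur)\times\oi$, the pushforwards are equal.

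For the equivalence, the backward direction is immediate: $\mux=(\mu')^*$ forces $\mur=\mur'$ by the right-marginal identity, whence $\Hmurx\pm(\mu)=\Hmurx\pm(\mux)=H^\pm_{\mur'*}((\mu')^*)=H^\pm_{\mur'*}(\mu')$. For the forward direction, I would first take right marginals on both sides of $\Hmurx\pm(\mu)=H^\pm_{\mur'*}(\mu')$: since $H^\pm_\nu$ fixes the second coordinate, this yields $\mur=\mur'$, so $\mu,\mu'$ share the same gaps, and by the previous paragraph $H^\pm_{\mur*}(\mux)=H^\pm_{\mur*}((\mu')^*)$ with both arguments in $\PPx(\sssi)$ and right marginal $\mur$. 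It then suffices to show $H^\pm_{\mur*}$ is injective on this set, which I would do by explicitly reconstructing any $\sigma\in\PPx(\sssi)$ from $\tilde\sigma=H^-_{\mur*}(\sigma)$: the gaps are determined by $\mur$; on each gap $\nu_k=\tilde\sigma(\set{a_k}\times\cdot)/(b_k-a_k)$ recovers $\sigma\restx{(a_k,b_k)\times\oi}=\gl\times\nu_k$; and on $\supp(\mur)\times\oi$ one recovers $\sigma$ as $\tilde\sigma$ with its atoms on the lines $\set{x=a_k}$ deleted, which is legitimate because $\sigma(\set{a_k}\times\oi)=0$ by continuity of the left marginal. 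The case $H^+$ is identical with $a_k$ replaced by $b_k$. I expect this reconstruction to be the main obstacle: it is the only place where the mass that $H^\pm_{\mur}$ piles onto the single gap-endpoint must be disentangled from the genuine support mass of $\sigma$, and where continuity of $\mul=\gl$ is indispensable.
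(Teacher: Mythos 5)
Your proof is correct, and it takes the same (indeed the only natural) route as the paper: the paper's entire proof is ``Immediate from the definitions above. (Verify $\muxr=\mur$ first.)'', and your write-up is exactly that verification, in the paper's suggested order — marginals of $\mux$ first, then membership in $\PPx(\sssi)$ and the projection property, then $\Hmurx\pm(\mu)=\Hmurx\pm(\mux)$ by comparing pushforwards strip by strip. The only substantive ingredient the paper leaves fully implicit, the injectivity of $H^\pm_{\mur*}$ on measures in $\PPx(\sssi)$ with right marginal $\mur$ (needed for the forward implication of the last equivalence), you supply correctly via the reconstruction of $\sigma$ from $H^\pm_{\mur*}(\sigma)$, using continuity of the left marginal to separate the collapsed strip mass on the lines $x=a_k$ (resp.\ $x=b_k$) from the genuine mass of $\sigma$.
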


\begin{proof}
  Immediate from the definitions above. (Verify $\muxr=\mur$ first.)
\end{proof}

This gives our desired unique representation of interval order limits.

\begin{theorem}\label{Tunique}
  The mapping $\mu\mapsto \Pi_\mu=\piwimu$ is a bijection of $\PPx(\sssi)$
  onto the set\/ $\IOoo$ of interval order limits.
\end{theorem}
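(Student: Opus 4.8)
The plan is to assemble the theorem directly from the three preceding results, \refT{Tinterval}, \refT{Tequiv}, and \refL{Lmux}, with essentially no new computation: the analytic content has already been extracted in proving those statements, and what remains is to read off surjectivity and injectivity of $\mu\mapsto\Pi_\mu$ restricted to $\PPx(\sssi)$.

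For surjectivity I would start from an arbitrary interval order limit $\Pi\in\IOoo$. By \refT{Tinterval}, in the form asserting that $\mu\mapsto\Pi_\mu$ maps $\PPL(\sssi)$ onto $\IOoo$, there is a measure $\mu\in\PPL(\sssi)$ with $\Pi_\mu=\Pi$. I then pass to its projection $\mux$, which lies in $\PPx(\sssi)$ by \refL{Lmux}. It remains to verify that $\mux$ still represents $\Pi$, i.e.\ $\Pi_{\mux}=\Pi_\mu$. Here I invoke \refT{Tequiv}: \refL{Lmux} supplies both $\muxr=\mur$ and $\Hmurx-(\mu)=\Hmurx-(\mux)$, which is precisely criterion \ref{=mur,H-} of \refT{Tequiv} applied to the pair $(\mu,\mux)$ (legitimate since $\mux\in\PPL(\sssi)$). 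The implication \ref{=mur,H-}$\implies$\ref{=Pi} then gives $\Pi_{\mux}=\Pi_\mu=\Pi$, so every interval order limit is the image of a measure in $\PPx(\sssi)$.

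For injectivity, suppose $\mu,\mu'\in\PPx(\sssi)$ satisfy $\Pi_\mu=\Pi_{\mu'}$. The implication \ref{=Pi}$\implies$\ref{=H-} of \refT{Tequiv} yields $\Hmurx-(\mu)=H^-_{\mur'*}(\mu')$, and the final clause of \refL{Lmux} rewrites this equality of $H^-$-images as $\mux=(\mu')^*$. Since both $\mu$ and $\mu'$ already lie in $\PPx(\sssi)$, the projection property of \refL{Lmux} (namely $\mux=\mu$ whenever $\mu\in\PPx(\sssi)$) forces $\mu=\mux=(\mu')^*=\mu'$. This establishes injectivity and hence the bijection.

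I do not expect a genuine obstacle at the level of this statement, since the delicate points—that $\mux$ lands in $\PPx(\sssi)$ while preserving the limit, and that equality of the $H^-$-images is equivalent to equality of the projected measures—are exactly the content of \refL{Lmux} and \refT{Tequiv}. The only care required is bookkeeping: tracking which right marginal enters each $H^\pm$ operator, and confirming that $\mux\in\PPL(\sssi)$ so that \refT{Tequiv} applies to the pair $(\mu,\mux)$.
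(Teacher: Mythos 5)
Your proposal is correct and follows essentially the same route as the paper: both reduce the theorem to the combination of \refT{Tinterval} (surjectivity onto $\IOoo$ from $\PPL(\sssi)$), \refT{Tequiv} (the equivalence of $\Pi_\mu=\Pi_{\mu'}$ with equality of the $H^-$-images), and \refL{Lmux} (the projection $\mu\mapsto\mux$ and its compatibility with these criteria). The paper merely compresses your two steps into the single equivalence $\Pi_{\mu_1}=\Pi_{\mu_2}\iff\mux_1=\mux_2$ for $\mu_1,\mu_2\in\PPL(\sssi)$, from which injectivity on $\PPx(\sssi)$ and surjectivity both follow at once.
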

\begin{proof}
If $\mu_1,\mu_2 \in\PPL(\sssi)$, then by \refT{Tequiv} and \refL{Lmux},
$\Pi_{\mu_1}=\Pi_{\mu_2}\iff \mux_1=\mux_2$. 
Thus, using $\mu^{**}=\mux$,  the mapping is injective
on $\PPx(\sssi)$, and by \refT{Tinterval} and \refL{Lmux} it is surjective.
\end{proof}

\begin{remark}
 This bijection is not a homeomorphism if we equip $\PPx(\sssi)$
with the usual subspace topology inherited from $\PP(\sssi)$; the correct
topology is the quotient topology given by the quotient map
$\PPL(\sssi)\to\PPx(\sssi)$ given by  $\mu\mapsto\mux$.  
\end{remark}

We can also give a different type of characterization of interval order limits.
Let $\sH$ be the poset with 4 elements \set{1,2,3,4} where the only strict
inequalities are $1<2$ and $3<4$ (as a digraph, the edge set 
$E(\sH)=\set{12,34}$).
Then a poset $P$ has an interval order if
and only if $P$ has no induced subposet isomorphic to $\sH$, \ie,
$\tind(\sH,P)=0$, 
see \cite{Fishburn}.
By the same argument as in the graph case \cite{SJ255},  
it is easily seen that the following holds.

\begin{theorem}
  \label{TintervalH}
The following are equivalent, for a poset limit $\Pi$:
\begin{romenumerate}
\item 
$\Pi$ is an interval order limit.
\item 
$\tind(\sH,\Pi)=0$. 
\item 
$P(n,\Pi)$ has a.s.\  an interval order for every $n$.  
\nopf
\end{romenumerate}
\end{theorem}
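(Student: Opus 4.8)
The plan is to prove the cyclic chain of implications (i) $\Rightarrow$ (ii) $\Rightarrow$ (iii) $\Rightarrow$ (i), drawing on four facts already in hand: Fishburn's characterization that a finite poset has an interval order precisely when it contains no induced copy of $\sH$; the continuity of the functional $\tind(\sH,\cdot)$ on $\cpq$; the identity \eqref{tqpind} linking $\tind$ to the law of $\pnpi$; and the almost sure convergence $\pnpi\asto\Pi$ supplied by \refT{T1}.

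The two easy directions go as follows. For (i) $\Rightarrow$ (ii): if $\Pi$ is an interval order limit, pick finite interval orders $P_n\to\Pi$; by Fishburn $\tind(\sH,P_n)=0$ for every $n$, and since $\tind(\sH,\cdot)$ extends to a continuous functional on $\cpq$, passing to the limit gives $\tind(\sH,\Pi)=0$. For (iii) $\Rightarrow$ (i): realize all the $\pnpi$ on one probability space via a kernel representing $\Pi$, so that \refT{T1} yields $\pnpi\to\Pi$ almost surely. Intersecting this with the countably many full-measure events furnished by (iii) (one for each $n$) gives a single realization in which every $\pnpi$ is a finite interval order and $\pnpi\to\Pi$. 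Thus $\Pi$ lies in the closure $\bIO$ of $\IO$, and since $\Pi\in\cpoo$ we conclude $\Pi\in\bIO\cap\cpoo=\IOoo$, i.e.\ $\Pi$ is an interval order limit.

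The real content is (ii) $\Rightarrow$ (iii). Fix $n$; for $n\le3$ the claim is vacuous, since a poset on fewer than four points cannot contain $\sH$, so assume $n\ge4$. By Fishburn, $\pnpi$ fails to have an interval order exactly when some four-element subset $S\subseteq[n]$ induces a subposet isomorphic to $\sH$, whence
\begin{equation*}
\P\bigpar{\pnpi\text{ has no interval order}}\le\sum_{S}\P\bigpar{\pnpi|_S\cong\sH},
\end{equation*}
the sum running over the $\binom{n}{4}$ four-element subsets. Because $\pnpi$ is built from i.i.d.\ sample points, its restriction to any fixed $S$ has, as an unlabelled poset, the same law as $P(4,\Pi)$, so each summand equals $\P(P(4,\Pi)\cong\sH)$. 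Summing \eqref{tqpind} over the finitely many labelled posets $Q$ on $[4]$ with $Q\cong\sH$ — each contributing exactly $\tind(\sH,\Pi)$ — shows $\P(P(4,\Pi)\cong\sH)$ is a finite multiple of $\tind(\sH,\Pi)$, hence $0$ under (ii). The union bound then forces $\pnpi$ to have an interval order almost surely, for every $n$.

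I expect the only delicate points to be bookkeeping rather than conceptual: correctly converting the unlabelled quantity $\tind(\sH,\Pi)$ into $\P(P(4,\Pi)\cong\sH)$ through \eqref{tqpind} (tracking the finite labelling/automorphism factor), and, in (iii) $\Rightarrow$ (i), arranging the countably many almost sure events on a common probability space so that $\Pi$ is simultaneously exhibited as a limit and as a limit of genuine interval orders. Both are routine, which is exactly why the argument runs parallel to the graph case treated in \cite{SJ255}.
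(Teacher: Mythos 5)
Your proof is correct and is essentially the argument the paper intends: the paper omits the proof, deferring to the graph-case argument of \cite{SJ255}, which runs exactly along your lines — continuity of $\tind(\sH,\cdot)$ on $\cpq$ for (i)$\Rightarrow$(ii), sampling consistency of $P(n,\Pi)$ together with \eqref{tqpind} and a union bound for (ii)$\Rightarrow$(iii), and the a.s.\ convergence $P(n,\Pi)\asto\Pi$ from \refT{T1} for (iii)$\Rightarrow$(i). The bookkeeping points you flag (the automorphism factor relating $\P(P(4,\Pi)\cong\sH)$ to $\tind(\sH,\Pi)$, and intersecting the countably many a.s.\ events on one probability space) are handled correctly, so nothing needs to be added.
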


\section{Semiorders}\label{Ssemi}

Let $\sL$ be the 
poset with 4 elements \set{1,2,3,4} where $1<2<3$ but $4$ is incomparable to
the others.
A \emph{semiorder} is a partial order that does not contain any induced
subposet isomorphic to $\sH$ or $\sL$, see \citet{Fishburn}.
We define a \emph{semiorder limit} as a poset limit that is the 
limit of a sequence of finite semiordered posets. 
In particular, every semiorder is an interval order 
and thus every semiorder limit is an interval order limit.
We denote the set of (unlabelled) finite semiorders by 
$\SO\subset\IO\subset\cP$,
its closure in $\cpq$ by $\bSO\subset\bIO\subset\cpq$,
and the set of semiorder limits by 
$\SOoo\=\bSO\setminus\SO=\bSO\cap\cpoo$;
thus $\SOoo\subset\IOoo\subset\cpoo$.

The arguments in \cite{SJ255} 
show the following analogy of \refT{TintervalH}.

\begin{theorem}
  \label{TsemiorderHL}
The following are equivalent, for a poset limit $\Pi$:
\begin{romenumerate}
\item 
$\Pi$ is a semiorder limit 
\item 
$\tind(\sH,\Pi)=\tind(\sL,\Pi)=0$ 
\item 
$P(n,\Pi)$ is a.s.\  semiordered  for every $n$.
\nopf
\end{romenumerate}
\end{theorem}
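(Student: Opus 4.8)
The plan is to establish the cycle (i)$\implies$(ii)$\implies$(iii)$\implies$(i), following the proof of \refT{TintervalH} essentially verbatim but now forbidding both $\sH$ and $\sL$. Three ingredients drive the argument: Fishburn's characterization of semiorders as the posets with no induced subposet isomorphic to $\sH$ or $\sL$; the fact that each functional $\tind(Q,\cdot)$ extends to a continuous functional on the compact space $\cpq$; and the almost sure convergence $\pnw\asto\Pi$ supplied by \refT{T1}\ref{T1a}. One could read off the $\sH$-statements from \refT{TintervalH}, since every semiorder limit is in particular an interval order limit, but it is cleaner to treat $\sH$ and $\sL$ on the same footing.

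To get (i)$\implies$(ii) I would pick finite semiorders $P_n\to\Pi$; since none contains an induced copy of $\sH$ or of $\sL$, we have $\tind(\sH,P_n)=\tind(\sL,P_n)=0$ for every $n$, and continuity of these functionals on $\cpq$ yields $\tind(\sH,\Pi)=\tind(\sL,\Pi)=0$ upon letting $\ntoo$. For (iii)$\implies$(i) I would fix a kernel $W$ with $\piw=\Pi$, use $\pnw\asto\Pi$ together with $\pnw\eqd\pnp$ from \refT{T1}, and observe that (iii) makes $\pnp$ almost surely a semiorder for every $n$ simultaneously; on that almost sure event the $\pnp$ form a sequence of finite semiorders with $\pnp\to\Pi$, so $\Pi\in\SOoo$ is a semiorder limit.

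The substantive direction is (ii)$\implies$(iii). Fix $n$; for $n\le 3$ the claim is trivial since $\sH$ and $\sL$ each have four points, so assume $n\ge4$. The random poset $\pnp$ is a semiorder exactly when no four of its points induce a copy of $\sH$ or of $\sL$. The family $\set{\pnp}_n$ is exchangeable and consistent, so the subposet induced on any fixed four-element subset of $[n]$ has the law of $P(4,\Pi)$; by \eqref{tqpind} the probability that this induced subposet is isomorphic to $\sH$ (respectively $\sL$) is a positive multiple of $\tind(\sH,\Pi)$ (respectively $\tind(\sL,\Pi)$), hence $0$ under (ii). Summing over the $\binom n4$ four-element subsets shows that the expected number of forbidden induced configurations vanishes, whence $\pnp$ is almost surely a semiorder.

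The only real obstacle lies in this last step: converting the vanishing of the densities $\tind(\sH,\Pi)$ and $\tind(\sL,\Pi)$ into the almost sure absence of induced copies in $\pnp$ requires the exchangeability and consistency of the random posets $\pnp$, together with some care over the combinatorial factor relating the labelled counts in \eqref{tqpind} to unlabelled induced copies. This is precisely the mechanism already used for $\sH$ in \refT{TintervalH} and, in the graph setting, in \cite{SJ255}; the second forbidden poset $\sL$ enters in exactly the same way and introduces no additional difficulty.
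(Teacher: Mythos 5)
Your proof is correct and takes essentially the same route as the paper, which establishes this theorem by invoking the arguments of \cite{SJ255} exactly as for \refT{TintervalH}: the cycle (i)$\implies$(ii)$\implies$(iii)$\implies$(i) using continuity of $\tind(Q,\cdot)$ on $\cpq$, the sampling identity \eqref{tqpind} together with the exchangeability and consistency of the family $\pnp$, and the a.s.\ convergence $\pnw\asto\Pi$ from \refT{T1}, with the second forbidden poset $\sL$ handled in the same way as $\sH$. Your reconstruction fills in precisely the argument the paper leaves to the cited reference.
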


\begin{example}
Let $\Pi$ be the poset limit 
corresponding to the poset $\sL$ in the sense that $t(Q,\Pi)=t(Q,\sL)$ for
every finite poset $Q$, see \cite[Example 1.12]{SJ224}.
It follows from Theorems \refand{TintervalH}{TsemiorderHL} that
$\Pi\in\IOoo$ but $\Pi\notin\SOoo$.
Consequently, the inclusion $\SOoo\subset\IOoo$ is strict.
\end{example}

\begin{example}\label{Ealmostsemiorder}
  If $\sss$ is an \ops, then the kernel $\ett{x\prec y}$ as in \refE{EW1}
  defines a poset limit $\Pi_\sss$. 
\citet{BrightwellG} say that $\sss$ is an \emph{almost-semiorder}
if, in our notation, 
$$
\P\bigpar{P(4,\sss)=\sH}=\P\bigpar{P(4,\sss)=\sL}=0;
$$
equivalently, see  \eqref{tqpind}, if 
$\tind(\sH,\Pi_\sss)=\tind(\sL,\Pi_\sss)=0$.
\refT{TsemiorderHL} thus says that $\sss$ is an almost-semiorder if and only
if $\Pi_\sss$ is a semiorder limit.
\end{example}

Since a semiorder limit is an interval order limit, it can be represented as
in \refT{Tinterval} or \refT{Tunique}; 
however, only certain measures $\mu$ are possible.
We will instead use a different (but related) representation.
We need some preparations.

Let $\cG$ be the set of functions $g:\oi\to\oi$ such that $g(x)\le g(y)$
when $x\le y$ (i.e., $g$ is weakly increasing) and $g(x)\ge x$ for all
$x\in\oi$. If $g\in\cG$, let $W_g$ be the 
kernel $W_g(x,y)=\ett{g(x)<y}$ on $(\oi,\gl,<)$, and let $\Pi_g$ be the
corresponding poset limit.
Note that 
$P(n,\Pi_g)=P(n,W_g)$ is
the interval order defined by intervals
$[X_i,g(X_i)]$, with $X_1,\dots,X_n$ independent uniform random points in $\oi$.
This is a semiorder (since $g$ is weakly increasing), and thus
it follows from \refT{TsemiorderHL} that $\Pi_g$ is a semiorder limit.

\begin{remark}\label{RSg}
  Alternatively, we may define $\sss_g$ as $\oi$ with the semiorder $x\prec
  y$ when $g(x) < y$. Thus $\sss_g$ is an \ops, and $\Pi_g$ constructed
  above equals $\Pi_{\sss_g}$ constructed in \refE{Ealmostsemiorder}.
\end{remark}

Let $\cgrc$ be the set of right-continuous functions in $\cG$.
Note that any function $g\in\cG$ can be modified at its jumps to become
right-continuous, and that this will 
\as{} not change $P(n,W_g)$, so it will 
not change $\Pi_g$.
Recall that the \emph{distribution function} of a measure $\nu$ on $\bbR$
is the right-continuous function $F(t)=F_\nu(t)\=\nu(-\infty,t]$.
\begin{lemma}\label{Lnu}
Let $g\in\cgrc$,  let $\nu_\pm=\nu_\pm(\Pi_g)=\nu_\pm(W_g)$ and let
$F_\pm(t)\=\nu_\pm[0,t]$,  $t\in\oi$,
be the distribution function of $\nu_\pm$.
Then
\begin{align}
  F_-(t)&= g(t), \label{F-}
\\
F_+(t)&
=1-\min\set{x:1-g(x)\le t}
=\max\set{y:g(1-y)\ge 1-t}, \label{F+}
\intertext{and, symmetrically,}
g(x)&  \label{lnu2}
=1-\min\set{t:1-F_+(t)\le x}
=\max\set{s:F_+(1-s)\ge 1-x}. 
\end{align}
\end{lemma}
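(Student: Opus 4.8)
The plan is to reduce everything to the explicit values of the functions $(W_g)_\pm$ from \eqref{W+-} and then read off the distribution functions $F_\pm$ using that the sampling point $X$ is uniform on $\oi$. By \refL{Lnu+-}\ref{Lnu+-W} it suffices to work with the kernel $W_g(x,y)=\ett{g(x)<y}$ on $(\oi,\gl,<)$ rather than with $\Pi_g$ itself. Directly from \eqref{W+-},
\[
(W_g)_+(x)=\gl\set{y:g(x)<y}=1-g(x),
\qquad
(W_g)_-(x)=\gl\set{y:g(y)<x}.
\]
Thus $\nu_+$ is the law of $1-g(X)$ and $\nu_-$ the law of $(W_g)_-(X)$, with $X$ uniform.

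The key tool is the left-continuous generalized inverse $g^{-1}(x)\=\inf\set{z\in\oi:g(z)\ge x}$ (the set is non-empty since $g(x)\ge x$ forces $g(1)=1$). Because $g\in\cgrc$ is nondecreasing and right-continuous, one has the standard duality $g(z)\ge x\iff z\ge g^{-1}(x)$, equivalently $g(z)<x\iff z<g^{-1}(x)$, and hence $g^{-1}(x)\le t\iff x\le g(t)$; right-continuity is exactly what guarantees $g(g^{-1}(x))\ge x$, so that the relevant infima are attained. In particular $\set{y:g(y)<x}=[0,g^{-1}(x))$, so $(W_g)_-(x)=g^{-1}(x)$.

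Now \eqref{F-} follows, since $F_-(t)=\P\bigpar{(W_g)_-(X)\le t}=\P\bigpar{g^{-1}(X)\le t}=\P(X\le g(t))=g(t)$, the last step using that $X$ is uniform and $g(t)\in\oi$. For \eqref{F+}, $F_+(t)=\P(1-g(X)\le t)=\P\bigpar{g(X)\ge 1-t}=\gl[g^{-1}(1-t),1]=1-g^{-1}(1-t)$, and the two stated forms are just this quantity rewritten: $g^{-1}(1-t)=\min\set{x:g(x)\ge 1-t}=\min\set{x:1-g(x)\le t}$ yields the first, while the substitution $y=1-x$ turns $\set{x:g(x)\ge 1-t}=[g^{-1}(1-t),1]$ into $\set{y:g(1-y)\ge 1-t}=[0,1-g^{-1}(1-t)]$, whose maximum is $F_+(t)$. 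Finally \eqref{lnu2} is obtained by inverting \eqref{F+} with the same duality: from $1-F_+(t)=g^{-1}(1-t)$ one gets $1-F_+(t)\le x\iff g^{-1}(1-t)\le x\iff 1-t\le g(x)\iff t\ge 1-g(x)$, so $\min\set{t:1-F_+(t)\le x}=1-g(x)$, which is the first form; the second follows identically after substituting $s=1-t$.

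The only genuine subtlety is the endpoint bookkeeping in the generalized-inverse duality — verifying that the relevant infima are attained and that the half-open versus closed intervals carry zero boundary mass — and this is precisely where the hypothesis $g\in\cgrc$ is used. Once the duality $g^{-1}(x)\le t\iff x\le g(t)$ is in hand, the remaining steps are routine computations with the uniform law of $X$.
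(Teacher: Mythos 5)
Your proof is correct and takes essentially the same approach as the paper: both compute $W_{g\pm}$ explicitly from \eqref{W+-} and exploit the monotonicity/right-continuity duality, the paper's $W_{g-}(x)=\sup\set{y:g(y)<x}$ being precisely your generalized inverse $g^{-1}(x)=\inf\set{z:g(z)\ge x}$, and its equivalence $W_{g-}(x)\le t\iff g(t)\ge x$ being your duality $g^{-1}(x)\le t\iff x\le g(t)$. The only difference is presentational: you name the inverse and state the duality as a standalone tool, while the paper inlines the same equivalences.
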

\begin{proof}
By \eqref{W+-} and the definition of $W_g$,
\begin{equation*}
  W_{g-}(x)=\intoi\ett{g(y)<x}\dd y
=\sup\set{y:g(y)<x}, 
\end{equation*}
(with $\sup\emptyset=0$)
and thus, for $x,t\in\oi$, 
\begin{equation*}
W_{g-}(x)\le t \iff \bigpar{y>t \implies g(y)\ge x}  
\iff g(t)\ge x.
\end{equation*}
Hence,
\begin{equation*}
  F_-(t)=\gl\set{x:W_{g-}(x)\le t}
=\gl\set{x:x\le g(t)}=g(t),
\end{equation*}
showing \eqref{F-}.

Similarly,  $W_{g+}(x)=1-g(x)$,
so  $ F_+(t)=\gl\set{x:1-g(x)\le t}$.
Thus
$$
1-g(x)\le t \iff 1-F_+(t)\le x, \qquad t,x\in\oi,
$$
which implies \eqref{F+}  and \eqref{lnu2}.
\end{proof}

\begin{remark}\label{RF+-}
Thus, if we assume $g\in\cgrc$, the relation between $g=F_-$ and 
$F_+$ is symmetric. Geometrically, we obtain the graph of
$F_-$ from the graph of $F_+$ (or conversely) by reflection in the line $x+y=1$,
adjusting the result to become right-continuous.  
\end{remark}

\begin{theorem}\label{Tsemi}
  If $g\in \cG$, then $\Pi_g$ is a semiorder limit. Conversely, every
  semiorder limit equals $\Pi_g$ for a unique $g\in\cgrc$.
Thus, the mapping $g\mapsto\Pi_g$ is a bijection of $\cgrc$ onto $\SOoo$.

More precisely, if $\Pi$ is a semiorder limit,
then $\Pi=\Pi_{F_-}$ where
$F_-\in\cgrc$ is the
distribution function of $\nu_-(\Pi)$.
Alternatively, if $F_+(t)$ is the
distribution function of $\nu_+(\Pi)$, then $\Pi=\Pi_g$ where $g\in\cgrc$ is
given by \eqref{lnu2}.
\end{theorem}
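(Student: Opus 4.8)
The plan is to establish the two directions of the bijection separately, leaning on \refT{TsemiorderHL} for the easy direction and on the degree-distribution machinery of \refL{Lnu} for the hard one. The forward direction is already essentially done: for $g\in\cG$ we argued before the lemma that $P(n,W_g)$ is a semiorder (since $g$ is weakly increasing, the intervals $[X_i,g(X_i)]$ are nested-or-disjoint in the way that excludes $\sL$ and $\sH$), so $\Pi_g\in\SOoo$ by \refT{TsemiorderHL}. The content is the converse, namely that every semiorder limit is $\Pi_g$ for a \emph{unique} right-continuous $g$.

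For existence, I would take an arbitrary semiorder limit $\Pi$ and form the measure $\nu_-(\Pi)\in\poi$ supplied by \refL{Lnu+-}, with right-continuous distribution function $F_-$. First I must check $F_-\in\cgrc$: it is automatically weakly increasing and right-continuous as a distribution function, and the constraint $F_-(t)\ge t$ should follow from $\nu_-$ being supported on $\oi$ together with the semiorder structure. Then the heart of the argument is to show $\Pi=\Pi_{F_-}$. Since a poset limit is determined by the infinite random poset $P(\infty,\cdot)$, or equivalently by $\set{\tind(Q,\cdot)}$, it suffices to match the two limits on these invariants. The natural route is: apply the already-proved existence in \refT{Tsemi}'s forward direction to get \emph{some} representation, combined with \refL{Lnu}, which tells us that for the candidate $g=F_-$ the limit $\Pi_g$ has predecessor distribution $\nu_-(\Pi_g)$ with distribution function exactly $g=F_-$. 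Thus $\nu_-(\Pi_{F_-})=\nu_-(\Pi)$. To upgrade this coincidence of degree distributions into equality of the limits, I would invoke the representation theorem \refT{Tinterval} (a semiorder limit is an interval order limit): represent $\Pi=\Pi_\mu$ with $\mul=\gl$, show that the semiorder condition $\tind(\sL,\Pi)=0$ forces $\mu$ to be supported on the graph of a weakly increasing function of the left endpoint, and identify that function with $F_-$ via \refL{Lnu}. This reduces the semiorder case to the already-established interval-order uniqueness of \refT{Tunique}.

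For uniqueness within $\cgrc$, suppose $\Pi_g=\Pi_{g'}$ with $g,g'\in\cgrc$. Applying \refL{Lnu+-}\ref{Lnu+-W} and \refL{Lnu}, both $g$ and $g'$ equal the distribution function $F_-$ of $\nu_-(\Pi_g)=\nu_-(\Pi_{g'})$; since two right-continuous functions agreeing as distribution functions must coincide pointwise, $g=g'$. This simultaneously yields injectivity of $g\mapsto\Pi_g$ on $\cgrc$. Combined with surjectivity from the existence step, the map is a bijection $\cgrc\to\SOoo$, giving the main assertion. The final ``more precisely'' clauses then follow: $\Pi=\Pi_{F_-}$ is the existence statement itself, and the alternative formula is obtained by feeding the symmetric relation \eqref{lnu2} of \refL{Lnu}, applied with the reflection \refR{Rreflection} exchanging $\nu_+$ and $\nu_-$, so that recovering $g$ from $F_+$ reduces to the $F_-$ case.

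The main obstacle I anticipate is precisely the step that converts equality of the one-dimensional predecessor distributions $\nu_-$ into equality of the full poset limits. A priori $\nu_-$ records only the single family of star-functionals $t(Q_k^-,\cdot)$ from \eqref{ika}, which is far weaker than knowing all $\tind(Q,\Pi)$. The reason this suffices in the semiorder case—and the crux I would have to argue carefully—is that the rigidity of semiorders (no $\sH$, no $\sL$) collapses the measure $\mu$ on $\sssi$ onto a \emph{monotone curve}, so that the two-dimensional representation is effectively one-dimensional and is pinned down by its left-endpoint/right-endpoint marginal behaviour alone. Making that collapse precise, ideally by routing through \refT{Tunique} so that the interval-order uniqueness does the bookkeeping, is where the real work lies; everything else is a matter of citing \refL{Lnu} and the distribution-function bijection.
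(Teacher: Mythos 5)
Your forward direction and your uniqueness argument are correct and coincide with the paper's (both rest on \refL{Lnu}: if $\Pi=\Pi_g$ with $g\in\cgrc$ then $g$ equals the distribution function of $\nu_-(\Pi)$, so $g$ is recovered from $\Pi_g$). The gap is in the existence step, precisely at the point you yourself flag as the crux. It is \emph{not} true that the semiorder condition $\tind(\sH,\Pi)=\tind(\sL,\Pi)=0$ forces an arbitrary representing measure $\mu\in\PPL(\sssi)$ from \refT{Tinterval} to be supported on the graph of a weakly increasing function. Counterexample: let $g(x)=\min(x+\tfrac12,1)$, let $\mu$ be the law of $(U,g(U))$ with $U$ uniform, and let $\mu'$ be the law of the pair equal to $(U,1-U)$ when $U<\tfrac12$ and to $(U,1)$ when $U\ge\tfrac12$. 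Both have left marginal $\gl$ and the same right marginal (uniform on $[\tfrac12,1]$ with mass $\tfrac12$, plus an atom of mass $\tfrac12$ at $1$), whose support is $[\tfrac12,1]$; since $h^-_{\mur}$ collapses the gap $(0,\tfrac12)$ to $0$, a direct check gives $\Hmurx-(\mu)=\Hmurx-(\mu')$, so \refT{Tequiv} yields $\Pi_{\mu'}=\Pi_\mu=\Pi_g\in\SOoo$. Yet $\supp(\mu')$ contains the strictly decreasing arc $\set{(x,1-x):0\le x\le\tfrac12}$. The reason your claim fails is that nested intervals in the support do not by themselves force a positive probability of an induced $\sL$: one also needs intervals completing the $3+1$ pattern, and inside a gap of $\supp(\mur)$ the coupling between left and right endpoints is invisible to the order. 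Note also that you cannot instead argue directly that $\nu_-(\Pi_{F_-})=\nu_-(\Pi)$ implies $\Pi_{F_-}=\Pi$: injectivity of $\nu_-$ on $\SOoo$ is \refC{Csemi-}, which the paper deduces \emph{from} this theorem, so invoking it would be circular.

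The repair is the paper's route: do not start from an abstract representation of $\Pi$, but build a special one from the approximating finite posets. Since each $P_n$ is a finite semiorder, it admits an interval representation $[a_{ni},b_{ni}]$ with $a_{ni}=i/|P_n|$ in which the right endpoints are sorted consistently, $b_{n1}\le b_{n2}\le\cdots$; the subsequential limit $\mu$ of the empirical measures then has the monotone-support property by construction (no two points $(x_1,y_1),(x_2,y_2)\in\supp(\mu)$ with $x_1<x_2$ and $y_1>y_2$). One then defines $g(x)\=\inf\set{y:(z,y)\in\supp(\mu)\text{ for some }z>x}$, which lies in $\cgrc$ and whose graph carries $\mu$ up to null vertical segments at jumps (here $\mul=\gl$ is used), so the measure-preserving map $x\mapsto(x,g(x))$ pulls $\wi$ back to $W_g$ and gives $\Pi=\Pi_\mu=\Pi_g$ directly; \refT{Tunique} is never needed. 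After that, $g=F_-$ by \refL{Lnu}, and your remaining steps, including the $F_+$ statement via \eqref{lnu2} and \refR{Rreflection}, go through as you wrote them.
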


\begin{proof}
  We have already remarked that $\Pi_g$ is a semiorder limit.

Conversely,
suppose that $\Pi$ is a semiorder limit.
The complement of the comparability graph of a semiorder is a unit
interval graph, 
and we argue as in
\cite[Section 10.4]{SJ254}, again omitting some details:
There exist semiorders $P_n$ with $P_n\to\Pi$.
As in the proof of \refT{Tinterval}, we can represent $P_n$ by intervals 
$[a_{ni},b_{ni}]$ with $a_{ni}=i/|P_n|$; further, since $P_n$ is a
semiorder, we may assume $b_{n1}\le b_{n2}\le\dots$.
Let $\mu_n$ be as in the proof of \refT{Tinterval}, 
and let again $\mu\in\sP(\sssi)$ be the limit of a subsequence;
thus $\Pi=\Pi_\mu$.
Moreover, now
the assumptions on $a_{ni}$ and $b_{ni}$ imply that if $x_1<x_2$ and
$y_1>y_2$, then $(x_1,y_1)$ and $(x_2,y_2)$ cannot both belong to
$\supp(\mu)$. 
Define $g$ by
\begin{equation*}
g(x)\=\inf\bigset{y:(z,y)\in\supp(\mu) \text{ for some }z>x}  
\end{equation*}
(with $\inf\emptyset=1$). Then $g\in\cgrc$, and, by the property of
$\supp(\mu)$ just shown, 
$\supp(\mu)\subseteq\set{(x,y): g(x-)\le y\le  g(x)}$, \ie, $\supp(\mu)$ is
a subset of the graph of $g$ with added vertical lines at the jumps.
Since $\mul=\gl$, and the set of jumps of $g$ is (at most) countable,
the vertical lines have measure 0 and can be ignored, and
it follows that the map $\gf:\oi\to\sssi$ given by $\gf(x)\=(x,g(x))$ is
measure preserving $(\oi,\gl)\to(\sssi,\mu)$.
(It also follows that $\supp(\mu)=\overline{\set{(x,g(x)}}$.)
Consequently, $\Pi$, which equals $\Pi_\mu$ and thus
can be represented by the kernel $\wi$ on
$(\sssi,\mu)$, can also be represented by the pullback $\wi^\gf$ on $(\oi,\gl)$;
moreover,
it is immediately seen that $\wi^\gf=W_g$. 
Thus $\Pi$ is represented by $W_g$, so $\Pi=\Pi_g$.
This shows that the mapping $g\mapsto\Pi_g$ is onto $\SOoo$.

If $\Pi=\Pi_g$ and $F_-$ is the distribution function of $\nu_-(\Pi)$, then
$g=F_-$ by \refL{Lnu},
which shows that $g\mapsto\Pi_g$ is injective, and thus a bijection
$\cgrc\to\SOoo$.

The final claims follow by \refL{Lnu}.
\end{proof}

\begin{corollary}
  \label{Csemi-}
The mapping\/  $\Pi\mapsto\nu_-(\Pi)$ is a homeomorphism of 
$\SOoo$ onto the set 
\begin{equation}\label{p-oi}
\sP_-(\oi)\=
\bigset{\nu\in\sP(\oi):\nu[0,t]\ge t \text{ for } t\in\oi}  .
\end{equation}
The inverse mapping $\sP_-(\oi)\to\SOoo$ is given by
$\nu\mapsto\Pi_\nu\=\Pi_{F_\nu}$.
\end{corollary}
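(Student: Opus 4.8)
The plan is to read everything off from \refT{Tsemi} and \refL{Lnu}, producing a set bijection, and then to upgrade that bijection to a homeomorphism by a soft compactness argument.

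First I would make the bijection explicit. By \refT{Tsemi}, $g\mapsto\Pi_g$ is a bijection of $\cgrc$ onto $\SOoo$, and for $\Pi=\Pi_g$ the function $g$ is precisely the distribution function of $\nu_-(\Pi)$, by \eqref{F-} in \refL{Lnu}. Hence $\nu_-(\Pi_g)$ is the unique measure whose distribution function is $g$, and the map $\Pi\mapsto\nu_-(\Pi)$ factors as the inverse bijection $\Pi\mapsto g$ followed by $g\mapsto\nu$, where $g=F_\nu$. I would then identify the image. A right-continuous weakly increasing $g\colon\oi\to\oi$ is the distribution function of a (unique) probability measure $\nu$ on $\oi$ exactly when $g(1)=1$; for $g\in\cgrc$ this holds automatically, since $g(1)\ge1$ while $g\le1$. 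Moreover, under $g=F_\nu$ the defining constraint $g(x)\ge x$ of $\cgrc$ is identical to the constraint $\nu[0,t]\ge t$ defining $\sP_-(\oi)$. Thus $g\mapsto\nu$ is a bijection of $\cgrc$ onto $\sP_-(\oi)$, so $\Pi\mapsto\nu_-(\Pi)$ is a bijection of $\SOoo$ onto $\sP_-(\oi)$, with inverse $\nu\mapsto\Pi_{F_\nu}=\Pi_\nu$ (note that $F_\nu\in\cgrc$ for $\nu\in\sP_-(\oi)$, so $\Pi_{F_\nu}$ is well defined).

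Next I would supply continuity. Continuity of $\Pi\mapsto\nu_-(\Pi)$ on $\SOoo$ is immediate from \refL{Lnu+-}\ref{Lnu+-Pi}, which already gives continuity of this map on all of $\cpoo$. To turn this continuous bijection into a homeomorphism I would invoke compactness. The space $\cpq$ is compact and $\cP$ is open in it, so $\cpoo=\cpq\setminus\cP$ is closed; since $\bSO$ is also closed (it is a closure), $\SOoo=\bSO\cap\cpoo$ is closed in the compact space $\cpq$, hence compact. On the other hand $\sP(\oi)$ is metrizable in the weak topology, as $\oi$ is a compact metric space, so $\sP_-(\oi)$ is Hausdorff. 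A continuous bijection from a compact space onto a Hausdorff space is a homeomorphism, which finishes the proof.

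The main obstacle, such as it is, lies entirely in the bookkeeping of the second paragraph: matching the three constraints defining $\cgrc$ (right-continuity, monotonicity, $g(x)\ge x$) against the two descriptions of its image (distribution functions of probability measures on $\oi$, and membership in $\sP_-(\oi)$), and in particular checking that $g(1)=1$ is forced so that no mass escapes. Everything else is quoted from \refT{Tsemi}, \refL{Lnu} and \refL{Lnu+-}, or is the standard compact-to-Hausdorff principle.
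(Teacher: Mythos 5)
Your proposal is correct and follows essentially the same route as the paper: both identify the bijection $\cgrc\leftrightarrow\sP_-(\oi)$ via distribution functions, combine it with \refT{Tsemi} and \refL{Lnu} to get the set bijection, and upgrade to a homeomorphism using continuity from \refL{Lnu+-} together with compactness. You merely spell out two details the paper leaves implicit (that $g(1)=1$ is forced for $g\in\cgrc$, and the compact-to-Hausdorff principle with the verification that $\SOoo$ is compact), which is fine.
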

\begin{proof}
  The mapping $\nu\mapsto F_\nu$ is a bijection of $\sP_-(\oi)$ onto $\cgrc$,
  and thus \refT{Tsemi} shows that
the mapping $\nu\mapsto \Pi_{F_\nu}$
is a bijection with inverse $\Pi\mapsto\nu_-(\Pi)$. Since $\nu_-$ is
continuous by \refL{Lnu+-} and the spaces are compact, the mappings are
homeomorphisms. 
\end{proof}

\begin{corollary}
  \label{Csemi+}
The mapping\/  $\Pi\mapsto\nu_+(\Pi)$ is a homeomorphism of
$\SOoo$ onto the set 
$\sP_-(\oi)$.
The inverse mapping $\sP_-(\oi)\to\SOoo$ is given by
$\nu\mapsto\Pi_\nu\refl$.
\end{corollary}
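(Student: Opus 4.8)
The plan is to deduce the whole statement from \refC{Csemi-} by transporting it through the reflection involution of \refR{Rreflection}. Write $\Pi\mapsto\Pi\refl$ for that involution on $\cpq$; being a continuous involution on a compact metric space, it is automatically a homeomorphism of $\cpq$ onto itself (its inverse is itself, which is continuous).

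First I would check that reflection maps $\SOoo$ onto itself. The two forbidden posets $\sH$ and $\sL$ are both self-dual, i.e.\ $\sH\refl\cong\sH$ and $\sL\refl\cong\sL$. Since $\tind(Q,\Pi\refl)=\tind(Q\refl,\Pi)$ for every finite $Q$ (this identity holds on finite posets and extends by continuity), we get $\tind(\sH,\Pi\refl)=\tind(\sH,\Pi)$ and $\tind(\sL,\Pi\refl)=\tind(\sL,\Pi)$, so by \refT{TsemiorderHL} we have $\Pi\in\SOoo\iff\Pi\refl\in\SOoo$. (Equivalently and more concretely: if semiorders $P_n\to\Pi$, then the reflected posets $P_n\refl$ are again semiorders and $P_n\refl\to\Pi\refl$.) Hence reflection restricts to a homeomorphism of $\SOoo$ onto $\SOoo$, equal to its own inverse.

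Next I would invoke \refR{Rreflection}, which gives $\nu_+(\Pi)=\nu_-(\Pi\refl)$. Thus the map $\Pi\mapsto\nu_+(\Pi)$ is the composition of the reflection homeomorphism $\SOoo\to\SOoo$ with the homeomorphism $\Pi\mapsto\nu_-(\Pi)$ of $\SOoo$ onto $\sP_-(\oi)$ supplied by \refC{Csemi-}. A composition of homeomorphisms is a homeomorphism, which establishes the first assertion.

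Finally, to pin down the inverse, fix $\nu\in\sP_-(\oi)$. By \refC{Csemi-} the unique semiorder limit with $\nu_-=\nu$ is $\Pi_\nu$. If $\Pi\in\SOoo$ satisfies $\nu_+(\Pi)=\nu$, then $\nu_-(\Pi\refl)=\nu$, so $\Pi\refl=\Pi_\nu$ by the injectivity part of \refC{Csemi-}, whence $\Pi=\Pi_\nu\refl$; this is precisely the claimed inverse $\nu\mapsto\Pi_\nu\refl$. The only step that is not purely formal is the verification that reflection preserves $\SOoo$ and restricts to a homeomorphism there; once that is in place, everything else follows mechanically from \refC{Csemi-} and \refR{Rreflection}.
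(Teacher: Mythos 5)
Your proof is correct and follows essentially the same route as the paper, whose entire proof reads ``By \refC{Csemi-} and \refR{Rreflection}''; you have simply spelled out the details (reflection preserves $\SOoo$ via self-duality of $\sH$ and $\sL$, reflection is a self-inverse homeomorphism, and the computation of the inverse map) that the paper leaves implicit.
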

\begin{proof}
  By \refC{Csemi-} and \refR{Rreflection}.
\end{proof}

These corollaries are analogous to the results for threshold graph limits in
\cite{SJ238}, where the limits are characterized by their degree
distributions.

\begin{remark}
The definition \eqref{p-oi} says that $\sP_-(\oi)$ is the set of all
distributions on $\oi$ 
that are \emph{stochastically  smaller} than the uniform distribution.  
\end{remark}

\begin{corollary}
  Every semiorder limit equals\/ $\Pi_\sss$ for some semiordered probability
  space $\sss$.
\end{corollary}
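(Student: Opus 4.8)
The plan is to reduce the statement to the explicit representation of semiorder limits already in hand and then to check that the ordered probability space it produces is a genuine semiorder. First I would apply \refT{Tsemi} to write an arbitrary semiorder limit as $\Pi=\Pi_g$ for some $g\in\cgrc\subseteq\cG$. By \refR{RSg} we have $\Pi_g=\Pi_{\sss_g}$, where $\sss_g$ denotes $\oi$ equipped with \lm{} and the relation $x\prec y\iff g(x)<y$, and $\sss_g$ is already known there to be an \ops. Thus the only thing left to establish is that the order $\prec$ on $\sss_g$ is a semiorder, \ie, that it admits no induced copy of $\sH$ or $\sL$; this is where the defining properties of $g\in\cG$ (weak monotonicity and $g(x)\ge x$) are used.

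The verification I would carry out pointwise. Suppose first that four points of $\oi$ formed an induced $\sH$, say with $a\prec b$, $c\prec d$, and all four cross pairs incomparable. Incomparability of $a$ and $d$ gives $d\le g(a)$, while $a\prec b$ gives $g(a)<b$, so $d<b$; symmetrically, incomparability of $b$ and $c$ gives $b\le g(c)$ and $c\prec d$ gives $g(c)<d$, so $b<d$, a contradiction. Suppose next that $a\prec b\prec c$ is a chain and $d$ is incomparable to each of $a,b,c$, giving an induced $\sL$. Incomparability of $a$ and $d$ gives $d\le g(a)<b$, whence $g(d)\le g(b)<c$ by monotonicity of $g$; but incomparability of $c$ and $d$ forces $c\le g(d)$, again a contradiction. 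Hence neither forbidden configuration occurs, $\sss_g$ is semiordered, and $\Pi=\Pi_{\sss_g}$ is the required representation.

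The point that needs the most care is recognising that this last verification must be done as a genuine combinatorial check on the relation $\prec$ itself, rather than merely invoking \refE{Ealmostsemiorder} together with \refT{TsemiorderHL}. Those results only guarantee that $\sss_g$ is an \emph{almost}-semiorder, \ie, that $\tind(\sH,\Pi_{\sss_g})=\tind(\sL,\Pi_{\sss_g})=0$, which is a measure-zero statement and does not by itself rule out forbidden quadruples inside the order. The weak monotonicity of $g$ is exactly what upgrades the almost-semiorder to a true semiorder, so the argument should invoke it explicitly, as above.
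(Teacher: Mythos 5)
Your proposal is correct and follows the paper's own route: the paper's entire proof of this corollary is ``By \refT{Tsemi} and \refR{RSg}.'' The only difference is that you explicitly verify that the relation $x\prec y\iff g(x)<y$ on $\sss_g$ admits no induced copy of $\sH$ or $\sL$ (using the weak monotonicity of $g$), a fact that \refR{RSg} asserts without proof by simply calling the relation a semiorder; your verification of this point is correct and is a reasonable detail to make explicit.
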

\begin{proof}
  By \refT{Tsemi} and \refR{RSg}.
\end{proof}

Recall that the converse holds by \refE{Ealmostsemiorder}.

We proceed to corresponding limit results.
\begin{theorem}\label{Tsemilim}
Let $P_n$ be a sequence of finite semiorders with $|P_n|\to\infty$. Then
the following are equivalent.
  \begin{romenumerate}
  \item \label{tsemilim}
$P_n$ converges to a poset limit.
  \item \label{tsemilim-}
The distributions $\nu_-(P_n)$ converge.
  \item \label{tsemilim+}
The distributions $\nu_+(P_n)$ converge.
  \end{romenumerate}
  The poset limit in \ref{tsemilim} is necessarily a semiorder limit.

If $\nu_-(P_n)\to\nu_-$, then 
$P_n\to\Pi_g$, where $g=F_-$ 
is the distribution function of $\nu_-$.
Thus $g(t)=\lim_n F_{\nu_-(P_n)}(t)$ for every continuity point $t$ of $g$.

If $\nu_+(P_n)\to\nu_+$, then 
$P_n\to\Pi_g$, where $g$ is given by \eqref{lnu2} where $F_+(t)$ 
is the distribution function of $\nu_+$.
Thus $F_+(t)=\lim_n F_{\nu_+(P_n)}(t)$ for every continuity point $t$ of
$F_+$,
and we can replace $F_+$ by $\limsup_n F_{\nu_+(P_n)}(t)$ in \eqref{lnu2}.
\end{theorem}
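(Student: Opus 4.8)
The plan is to reduce the entire statement to the compactness of $\cpq$ together with the two homeomorphism results \refC{Csemi-} and \refC{Csemi+}. The implications \ref{tsemilim}$\implies$\ref{tsemilim-} and \ref{tsemilim}$\implies$\ref{tsemilim+} are immediate from the continuity of $\nu_\pm$ on $\cpq$ (\refL{Lnu+-}), and the same lemma, applied along any convergent subsequence, shows that a limit of the semiorders $P_n$ lies in $\bSO\cap\cpoo=\SOoo$; this already gives the assertion that the limit in \ref{tsemilim} is necessarily a semiorder limit.

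For the converses, \ref{tsemilim-}$\implies$\ref{tsemilim} and \ref{tsemilim+}$\implies$\ref{tsemilim} are proved identically, so consider the former. Assuming $\nu_-(P_n)\to\nu_-$, I would use compactness of $\cpq$ to reduce to showing that all convergent subsequences share a single limit. If $P_{n_k}\to\Pi$, then $\Pi\in\SOoo$ and, by continuity, $\nu_-(\Pi)=\nu_-$; in particular $\nu_-\in\sP_-(\oi)$. Since $\Pi\mapsto\nu_-(\Pi)$ is injective on $\SOoo$ by \refC{Csemi-}, the limit $\Pi$ is forced to be the unique preimage $\Pi_{F_{\nu_-}}$, independently of the subsequence; hence $P_n\to\Pi_{F_{\nu_-}}$, with $g=F_{\nu_-}$ in the notation of \refT{Tsemi}. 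The argument from \ref{tsemilim+} is the same, using the injectivity of $\nu_+$ from \refC{Csemi+} and then identifying the limit through the alternative formula \eqref{lnu2} of \refT{Tsemi}.

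Once convergence is known, the identification of $g$ is exactly the ``more precisely'' half of \refT{Tsemi}, and the pointwise formulas $g(t)=\lim_n F_{\nu_-(P_n)}(t)$ and $F_+(t)=\lim_n F_{\nu_+(P_n)}(t)$ at continuity points are just the convergence of distribution functions under weak convergence. The one step requiring genuine care, and the main obstacle, is the claim that $\limsup_n F_{\nu_+(P_n)}(t)$ may be substituted for $F_+(t)$ in \eqref{lnu2}. Writing $\tilde F(t)\=\limsup_n F_{\nu_+(P_n)}(t)$, which is nondecreasing as a $\limsup$ of nondecreasing functions, the Portmanteau inequalities applied to the closed set $[0,t]$ and the open set $[0,t)$ give $F_+(t-)\le\tilde F(t)\le F_+(t)$, so $\tilde F$ and $F_+$ agree off the countable set of jumps of $F_+$. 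The formula \eqref{lnu2} is the generalised inverse $g(x)=1-\inf\set{t:F_+(t)\ge 1-x}$, and I would verify that this infimum is unaffected by such a modification: the bound $\tilde F\le F_+$ gives one inequality, while approaching the threshold through continuity points, where $\tilde F=F_+$, gives the reverse. Consequently the function produced from $\tilde F$ agrees with the true $g$ off a countable set, and as both lie in $\cgrc$ they coincide, yielding the same semiorder limit $\Pi_g$.
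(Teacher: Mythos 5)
Your proposal is correct and follows essentially the same route as the paper: the forward implications and the identification of the limit as a semiorder limit come from \refL{Lnu+-}, the converses use compactness of $\cpq$ plus the uniqueness statement of \refT{Tsemi} (your appeal to the injectivity in \refC{Csemi-} is the same fact in corollary form), and the pointwise formulas follow from weak convergence of distribution functions at continuity points. Your careful verification that $\limsup_n F_{\nu_+(P_n)}(t)$ can replace $F_+$ in \eqref{lnu2} fills in a detail the paper leaves implicit, but it is an elaboration of the same argument rather than a different approach.
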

\begin{proof}
  If \ref{tsemilim} holds, then \ref{tsemilim-} and \ref{tsemilim+}  hold by
  \refL{Lnu+-}. 

Assume now that \ref{tsemilim-} holds. 
Thus $\nu_-(P_n)\to\nu_-$ for some distribution $\nu_-\in\sP(\oi)$. 
Let $F_-$ be the
distribution function of $\nu_-$.
Suppose that $\Pi\in\cpoo$ 
is the limit of a subsequence of $P_n$. 
Then \refL{Lnu+-}
and our assumption $\nu_-(P_n)\to\nu_-$ imply that $\nu_-(\Pi)=\nu_-$.
Moreover, $\Pi$ is a semiorder limit, and thus \refT{Tsemi} implies that
$\Pi=\Pi_{F_-}$. 
  
Consequently every convergent subsequence of $P_n$ has the same limit
$\Pi_{F_-}$, and 
(since $\cpq$ is compact) this implies that the full
sequence $P_n$ converges to $\Pi_{F_-}$.
Hence \ref{tsemilim-}$\implies$\ref{tsemilim}.

\ref{tsemilim+}$\implies$\ref{tsemilim} follows by a similar argument,
or from the implication
\ref{tsemilim-}$\implies$\ref{tsemilim} by reversing the orders as in
\refR{Rreflection}. 

The final claims follow by \refL{Lnu+-} and \refT{Tsemi}, recalling that
if $\nu_n$ and $\nu$ are probability measures on $\bbR$ with distribution
functions $F_n$ and $F$, then
$\nu_n\to\nu$ if and only if $F_n(t)\to F(t)$ at every continuity point $t$
of $F$. 
\end{proof}

This theorem extends to random posets.

\begin{theorem}\label{Tsemilimrandom}
Let $P_n$ be a sequence of random finite semiorders with $|P_n|\pto\infty$. Then
the following are equivalent.
  \begin{romenumerate}
  \item \label{tsemilimrandom}
$P_n\dto\Pi$ for some random poset limit $\Pi$.
  \item \label{tsemilimrandom-}
$\nu_-(P_n)\dto \nu_-$, for some random $\nu_-\in\sP(\oi)$.
  \item \label{tsemilimrandom+}
$\nu_+(P_n)\dto \nu_+$, for some random $\nu_+\in\sP(\oi)$.
  \end{romenumerate}
If these hold, then $\nu_\pm\eqd\nu_\pm(\Pi)$ and
$\Pi\eqd \Pi_{{\nu_-}}$; in particular, $\Pi$ is a.s.\ a
semiorder limit. 

As a special case, the result holds with non-random $\Pi$, $\nu_-$, $\nu_+$
and $\dto$ replaced by $\pto$.
\end{theorem}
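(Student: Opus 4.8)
The plan is to deduce the distributional statement from the deterministic \refT{Tsemilim} together with the homeomorphisms of Corollaries \refand{Csemi-}{Csemi+}, via a standard subsequence argument in the compact metric space $\cpq$. Throughout I regard $P_n$ as a random element of $\cpq$ and $\nu_\pm(P_n)$ as a random element of $\poi$; both spaces are compact, so every sequence of laws on them is tight and hence relatively compact for convergence in distribution. The implications (i)$\implies$(ii) and (i)$\implies$(iii) are then immediate: since $\nu_\pm:\cpq\to\poi$ is continuous by \refL{Lnu+-}, the continuous mapping theorem gives $\nu_\pm(P_n)\dto\nu_\pm(\Pi)$ whenever $P_n\dto\Pi$, and uniqueness of distributional limits yields $\nu_\pm\eqd\nu_\pm(\Pi)$.

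For the converse, assume (ii), so $\nu_-(P_n)\dto\nu_-$. By relative compactness of the laws of $P_n$ it suffices to show that every subsequential distributional limit $\Pi'$ of $(P_n)$ has one and the same law, namely that of $\Pi_{\nu_-}$; the subsequence principle then forces $P_n\dto\Pi_{\nu_-}$. So fix a subsequence with $P_{n_k}\dto\Pi'$. The decisive step is to locate $\Pi'$ in $\SOoo$ almost surely. Each finite poset $Q$ is an isolated, hence clopen, point of $\cpq$, so $\P(P_{n_k}=Q)\to\P(\Pi'=Q)$; but $\P(P_n=Q)\le\P(|P_n|\le|Q|)\to0$ because $|P_n|\pto\infty$, whence $\P(\Pi'=Q)=0$ for every finite $Q$ and thus $\Pi'\in\cpoo$ \as. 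Since each $P_n\in\SO\subseteq\bSO$ and $\bSO$ is closed, the portmanteau theorem gives $\Pi'\in\bSO$ \as; combining, $\Pi'\in\bSO\cap\cpoo=\SOoo$ \as.

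It remains to identify the law of $\Pi'$. Applying \refL{Lnu+-} and the continuous mapping theorem along the subsequence gives $\nu_-(P_{n_k})\dto\nu_-(\Pi')$, while by assumption $\nu_-(P_{n_k})\dto\nu_-$; hence $\nu_-(\Pi')\eqd\nu_-$. By \refC{Csemi-}, on $\SOoo$ the map $\nu_-$ is a homeomorphism onto $\sP_-(\oi)$ with inverse $\nu\mapsto\Pi_\nu$, so $\Pi'=\Pi_{\nu_-(\Pi')}$ \as; applying this measurable inverse to $\nu_-(\Pi')\eqd\nu_-$ yields $\Pi'\eqd\Pi_{\nu_-}$. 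Thus every subsequential limit has the law of $\Pi_{\nu_-}$, proving (ii)$\implies$(i) with $\Pi\eqd\Pi_{\nu_-}$ (in particular $\Pi\in\SOoo$ \as), and the remaining identities $\nu_\pm\eqd\nu_\pm(\Pi)$ follow from the first paragraph. The implication (iii)$\implies$(i) is identical, using \refC{Csemi+} (equivalently, reflecting the orders as in \refR{Rreflection}). Finally, since convergence in probability to a constant and convergence in distribution to that constant coincide in a metric space, the non-random statement with $\pto$ is the special case in which all limiting laws are point masses.

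The main obstacle is the converse direction, precisely the step placing $\Pi'$ in $\SOoo$ almost surely: the map $\nu_-$ is injective—indeed a homeomorphism—only on $\SOoo$, not on all of $\cpq$, so the representation $\Pi'=\Pi_{\nu_-(\Pi')}$ becomes available only after $|P_n|\pto\infty$ has been used to exclude mass on the isolated finite posets and the closedness of $\bSO$ has kept the limit within the semiorder regime. Once $\Pi'$ is pinned to $\SOoo$, the rest is routine transport of the deterministic bijection through the continuous mapping theorem and the subsequence principle.
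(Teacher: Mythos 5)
Your proof is correct and takes essentially the same route as the paper's: compactness of the space of laws on $\cpq$, a subsequence argument, placing every subsequential limit in $\SOoo$ almost surely, and identifying its law via the homeomorphism of \refC{Csemi-} (with \refC{Csemi+} or reflection handling $\nu_+$). The only minor variation is the step locating the limit in $\SOoo$: you use the portmanteau theorem with the closed set $\bSO$ together with the isolated-points argument, whereas the paper applies $0=\tind(\sH,P_n)\dto\tind(\sH,\Pi)$ (and likewise for $\sL$) and invokes \refT{TsemiorderHL} --- a variant that, as the paper later remarks, also extends to sequences $P_n$ that are not themselves semiorders.
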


\begin{proof}
\ref{tsemilimrandom}  $\implies$\ref{tsemilimrandom-},\ref{tsemilimrandom+}.
By \refL{Lnu+-}, which also shows
$\nu_\pm\eqd\nu_\pm(\Pi)$.

\ref{tsemilimrandom-}  $\implies$\ref{tsemilimrandom}.  
Since $\cpq$ is a compact metric space, the space $\sP(\cpq)$ of
distributions on $\cpq$ is compact (see \eg{} \cite{Billingsley}); thus we
can select a subsequence along which $P_n$ converges in distribution:
$P_n\dto\Pi$ for some random $\Pi\in\cpq$. It follows that a.s.\
$\Pi\in\cpoo$. Moreover, if $Q=\sH$ or $\sL$, 
then, along the subsequence,
$0=\tind(Q,P_n)\dto\tind(Q,\Pi)$ so $\tind(Q,\Pi)=0$ a.s.;
hence $\Pi\in\SOoo$ a.s.\ by \refT{TsemiorderHL}.
Furthermore, it follows from \refL{Lnu+-} that,
still along the subsequence,
$\nu_-(P_n)\dto\nu_-(\Pi)$; thus $\nu_-(\Pi)\eqd\nu_-$.

If $\Pi'$ is the limit in distribution of $P_n$ along some other
subsequence, we thus have $\nu_-(\Pi)\eqd\nu_-(\Pi')$.
Since $\nu_-$ is a homeomorphism by \refC{Csemi-}, it follows that
$\Pi\eqd\Pi'$. Hence every convergent subsequence of the distributions
$\cL(P_n)$ has the same limit, and it follows (using compactness again) that
the full sequence converges, \ie, \ref{tsemilimrandom} holds.

\ref{tsemilimrandom+}  $\implies$\ref{tsemilimrandom}.  By
\refR{Rreflection}.

Finally, \refC{Csemi-} yields
$\Pi= \Pi_{{\nu_-(\Pi)}}\eqd \Pi_{{\nu_-}}$.
\end{proof}

\begin{example}
The random graph order $G(n,p)$ is obtained by 
regarding $G(n,p)$ as a random directed graph, with all edges directed $i\to
j$ when $i<j$, and taking the transitive closure, see \eg{} \cite{PittelT}.

These random orders are (typically) not semiorders, but \citet{BrightwellG}
have shown that their limits are semiorder limits: if
\ntoo{} and
$p\qw\log p\qw/n\to c$ for some $c\in \oi$, then 
$G(n,p)\pto \Pi_c$, where the poset limit $\Pi_c$ is represented by the
kernel $W_c(x,y)\=\ett{x+c<y}$ on $(\oi,\gl)$; thus $W_c=W_{g_c}$ for the
function $g_c(x)\=\min(x+c,1)$ with $g_c\in\cgrc$, and thus
$\Pi_c=\Pi_{g_c}\in\SOoo$. 
(More precisely, it suffices that
$\min(p\qw\log p\qw/n,1)\to c$, and this exhausts all possible poset limits,
see \cite{BrightwellG}.
In \cite{BrightwellG},
the limit is 
described as the \ops{} $\sss_{g_c}$, see \refR{RSg};
it is there denoted  $S_c$.)

We have $F_-(\Pi_c)=g_c$ and by an easy calculation, or by \refR{RF+-}, 
$F_+(\Pi_c)=g_c$ too. $\nu_-(\Pi_c)=\nu_+(\Pi_c)$ is the distribution of the
random variable $\max(U-c,0)$ with $U$ uniformly distributed on $\oi$.

See also \cite[Example 9.4]{SJ224}, where the kernel $W_c$ is denoted
$W_{c\qw}$.
\end{example}

\begin{example}
\citet{BrightwellG} have shown, more generally, 
that if a sequence of classical sequential
growth models has a poset limit (in probability, see \cite[Remark 4.3]{SJ224}), 
then the limit is a semiorder limit.
(In \cite{BrightwellG}  called an almost-semiorder, see
\refE{Ealmostsemiorder}.) 
In fact, they prove that if $P_n$ is a random poset of order $n$ given by 
some classical sequential growth model (possibly different for different
$n$), then $\E\tind(\sH,P_n)\to0$ and $\E\tind(\sL,P_n)\to0$; if further
$P_n\pto \Pi$, then thus $\tind(\sH,\Pi)=\tind(\sL,\Pi)=0$ by dominated
convergence and $\Pi$ is a semiorder limit by \refT{TsemiorderHL}.

Although the orders $P_n$ are not semiorders (typically), the proof of
\refT{Tsemilimrandom} still 
holds by the result just mentioned. (It suffices that any subsequence limit
is a semiorder limit.) Thus, for example, $P_n\pto\Pi$ for a (non-random)
semiorder limit $\Pi$ if and only if $\nu_-(P_n)\pto\nu_-$ for some 
(non-random) distribution $\nu_-$,
and then $\Pi=\Pi_{\nu_-}$.

\citet{BrightwellG} gave sufficient conditions for convergence, which are
related to the result just mentioned but more explicit.
Under these conditions,
they also gave a representation of the limit 
as a semiordered space which they denote by $T_r$, 
where $r:\oi\to[0,\infty]$ is some Borel function; 
in our notation the limit
is $\Pi_{T_r}$, and it is defined 
by the kernel
$\tW_r(x,y)\=\ett{\int_x^y r(t)\,dt>1}$ on 
$(\oi,\gl,<)$.
This is closely related to the representation given in \refT{Tsemi}; in fact, 
$\tW_r=W_g$ where $g(x)\=\sup\bigset{y\le1:\int_x^yr(t)\dd t\le1}$
and ${T_r}=\sss_g$ defined in \refR{RSg}.

It is easily seen that $g(x)=1/2$ for $x<1/4$, $g(x)=\min(x+1/2,1)$
for $x\ge1/4$ cannot be represented as $\tW_r$, so not every semiorder limit
can be represented as some $T_r$.
\end{example}

\section{Relations to interval graph limits}\label{Sgraphs}

Given a poset $P$, let $\Psi(P)$ denote its \emph{comparability graph}, \ie,
the graph with vertex set $P$ and edge set \set{ij:i<_P j\text{ or } j<_p i};
further, let $\bPsi(P)$ denote the complement of $\Psi(P)$.
Thus $\Psi$ and $\bPsi$ are maps from the set $\cP$ of (unlabelled) finite
posets to the set $\cU$ of (unlabelled) finite graphs; 
the following lemma says that these maps are
continuous in the topologies used for poset limits and graph limits.
As in \cite{SJ209}, let
$\cuq$ be the completion of $\cU$ and let
$\cuoo\=\cuq\setminus\cU$ be the set of graph limits.

\begin{lemma}
The maps $\Psi$ and $\bPsi$ extend to continuous maps 
$\cpq\to\cuq$, mapping
the space $\cpoo$ of posets limits into the space $\cuoo$ of graph limits.
In particular, if a sequence 
of posets $P_n$ converges, 
then so do the sequences $\Psi(P_n)$ and $\bPsi(P_n)$ of graphs,
and if $P_n\to\Pi$, then $\Psi(P_n)\to\Psi(\Pi)$, $\bPsi(P_n)\to\bPsi(\Pi)$.  
\end{lemma}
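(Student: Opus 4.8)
The plan is to push everything down to the homomorphism-density functionals and to exploit that $\Psi$ commutes with passing to induced sub-objects. The point is that for a finite poset $P$ and any subset $S$ of its ground set, the comparability graph of the induced subposet $P|_S$ equals the induced subgraph of $\Psi(P)$ on $S$, since adjacency of $i,j$ in $\Psi(P)$ depends only on the order relation between $i$ and $j$. Sampling therefore commutes with $\Psi$, giving the key identity: for every finite graph $F$ and every finite poset $P$,
\begin{equation*}
  \tind(F,\Psi(P))=\sum_{Q}\tind(Q,P),
\end{equation*}
where the sum is over the finitely many isomorphism classes of posets $Q$ with $|Q|=|F|$ and $\Psi(Q)\cong F$. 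First I would prove this using the injective-map description of $\tind$: choosing a uniformly random injective map $\gf\colon[k]\to P$ with $k=|F|$, the induced subgraph of $\Psi(P)$ on the image is isomorphic to $F$ exactly when the order pulled back along $\gf$ is isomorphic to one of the posets $Q$ appearing in the sum, and these events are disjoint. (The bookkeeping with labellings and automorphisms is routine and I would suppress it.) The same identity holds verbatim with $\Psi$ and comparability replaced by $\bPsi$ and incomparability.

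Granting the identity, the continuity statement is a soft consequence of the structure of the two completions. Recall that each $\tind(Q,\cdot)$ extends to a continuous functional on $\cpq$, that each $\tind(F,\cdot)$ extends to a continuous functional on $\cuq$, that a graph limit is determined by its density vector $\set{\tind(F,\cdot)}_F$, and that $\cuq$ is compact \cite{SJ209}. Given finite posets with $P_n\to\Pi$ in $\cpq$, each term $\tind(Q,P_n)\to\tind(Q,\Pi)$, so the finite right-hand side of the identity converges; hence $\tind(F,\Psi(P_n))$ converges for every $F$. By compactness the sequence $\Psi(P_n)$ has subsequential limits in $\cuq$, and since all functionals converge along the whole sequence, every such limit has the same density vector and is therefore one and the same point $\Gamma\in\cuq$; thus $\Psi(P_n)\to\Gamma$. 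Interlacing two sequences converging to the same $\Pi$ shows that $\Gamma$ depends only on $\Pi$, so we may set $\Psi(\Pi)\=\Gamma$; this defines the extension on all of $\cpq$. Passing to the limit in the identity gives $\tind(F,\Psi(\Pi))=\sum_{Q}\tind(Q,\Pi)$ for every $\Pi\in\cpq$, with continuous right-hand side. Continuity of the extended map is then immediate: if $\Pi_m\to\Pi$ then $\tind(F,\Psi(\Pi_m))\to\tind(F,\Psi(\Pi))$ for every $F$, which is precisely $\Psi(\Pi_m)\to\Psi(\Pi)$ in $\cuq$.

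It remains to see that $\cpoo$ is carried into $\cuoo$. Any $\Pi\in\cpoo$ is a limit of finite posets $P_n$ with $|P_n|\to\infty$; then the $\Psi(P_n)$ are finite graphs with $|\Psi(P_n)|=|P_n|\to\infty$ converging to $\Psi(\Pi)$. Since $\cU$ is open and discrete in $\cuq$, a sequence with unboundedly many vertices cannot converge to a point of $\cU$, so $\Psi(\Pi)\in\cuoo$. For $\bPsi$ one may either rerun the argument with the incomparability relation, or observe that $\bPsi=c\circ\Psi$, where $c$ is the complementation involution on $\cuq$, which is continuous because $\tind(F,\bar G)=\tind(\bar F,G)$ for finite graphs and this passes to the completion.

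The only genuinely non-formal points are the two places where compactness is used: guaranteeing that the convergent family of density values is realized by an honest element of $\cuq$ (compactness of $\cuq$ together with the fact that a graph limit is pinned down by its densities), and guaranteeing that the image of $\cpoo$ does not collapse into the finite graphs $\cU$ (openness of $\cU$). The combinatorial identity itself, which carries the real content, is elementary; I expect the labelling/automorphism bookkeeping in it to be the most error-prone step, even though it is routine.
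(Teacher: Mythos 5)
Your proof follows essentially the same route as the paper's: the same key identity expressing $\tind(F,\Psi(P))$ as a finite sum of induced poset densities, the same complementation identity $\tind(F,\overline G)=\tind(\overline F,G)$ to handle $\bPsi$, and the same observation $|\Psi(P)|=|P|$ to see that $\cpoo$ is carried into $\cuoo$; the paper merely compresses your compactness/subsequence argument into an appeal to the definition of the topologies on $\cpq$ and $\cuq$. One concrete correction to the bookkeeping you chose to suppress: since $\tind$ is defined by counting labelled injective maps, your displayed identity is false as written when the sum runs over isomorphism classes --- it needs the weights $|\mathrm{Aut}(F)|/|\mathrm{Aut}(Q)|$, as the case $F=\sK_2$ shows, where $\tind(\sK_2,\Psi(P))=2\,\tind(Q,P)$ with $Q$ the two-element chain. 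The paper sidesteps this by summing instead over the labelled digraphs $F_i$ obtained by directing the edges of $F$ (keeping only those that are posets); with either fix the right-hand side remains a finite sum of continuous functionals on $\cpq$, so the rest of your argument goes through unchanged.
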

\begin{proof}
  If $F$ is a finite graph, then $\tind(F,\Psi(P))=\sum_i \tind(F_i,P)$, where
the $t$ on the left-hand side is the graph version of the  functional $t$, and
  $F_1,F_2,\dots$ are the different digraphs obtained by directing the edges
  in $F$ (it suffices to consider such digraphs that are posets, since
  otherwise $\tind(F_i,P)=0$).
This shows that the map $\Psi$ is continuous, and extends
to $\cpq$, by  the definition of the
topologies in $\cuq$ and $\cpq$, see \cite{SJ209} and \cite{SJ224}.

Similarly, the map $G\mapsto\overline G$ mapping a graph to its complement
extends to a continuous map $\cuq\to\cuq$ since $\tind(F,\overline
G)=\tind(\overline F,G)$. Thus the composition $\bPsi$ too is continuous.

Since $|\Psi(P)|=|\bPsi(P)|=|P|$ for any finite poset $P$, it follows easily
that $\Psi$ and $\bPsi$ map $\cpoo$ into $\cuoo$.
\end{proof}

\subsection{Interval orders}

If $P$ is an interval order, then $\bPsi(P)$ is an interval graph.
Conversely, every interval graph can be obtained in this way. Thus, 
$\bPsi$ maps $\IO$ onto the set $\IG$ of interval graphs.
Let $\bIG$ be the closure of $\IG$ in $\cuq$ and let
$\IGoo\=\bIG\setminus\IG$ be the set of interval graph limits.

\begin{theorem}\label{TIOIG}
  $\bPsi$ maps\/ $\bIO$ onto $\bIG$ and\/ $\IOoo$ onto $\IGoo$. 
Moreover, $\bPsi\qw(\IGoo)=\IOoo$, \ie, if
  $\Pi\in\cpq$ and $\bPsi(\Pi)$ is an interval graph limit, then $\Pi$ is an
  interval order limit.
\end{theorem}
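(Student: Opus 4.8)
The plan is to establish the three assertions $\bPsi(\bIO)=\bIG$, $\bPsi(\IOoo)=\IGoo$, and $\bPsi\qw(\IGoo)=\IOoo$ in turn, treating the first two as soft consequences of continuity and compactness and reserving the real work for the last. Throughout I use the facts from the lemma above: $\bPsi$ is continuous on $\cpq$, it preserves cardinality (so it sends finite posets to finite graphs and $\cpoo$ into $\cuoo$), and $\bPsi(\IO)=\IG$.

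\emph{The two surjectivity statements.} Since $\bIO=\overline{\IO}$ is a closed subset of the compact space $\cpq$, it is compact, so $\bPsi(\bIO)$ is compact and hence closed in $\cuq$; as it contains $\bPsi(\IO)=\IG$, it contains $\bIG=\overline{\IG}$. Conversely, continuity gives $\bPsi(\bIO)=\bPsi(\overline{\IO})\subseteq\overline{\bPsi(\IO)}=\bIG$, so $\bPsi(\bIO)=\bIG$. For $\bPsi(\IOoo)=\IGoo$: each $\Pi\in\IOoo=\bIO\setminus\IO$ lies in $\bIO\cap\cpoo$, so $\bPsi(\Pi)\in\bPsi(\bIO)=\bIG$ is a graph limit, whence $\bPsi(\Pi)\in\bIG\setminus\IG=\IGoo$; conversely, given $\Gamma\in\IGoo\subseteq\bIG=\bPsi(\bIO)$ I choose $\Pi\in\bIO$ with $\bPsi(\Pi)=\Gamma$ and note $\Pi$ cannot be finite (else $\Gamma$ would be a finite graph), so $\Pi\in\bIO\setminus\IO=\IOoo$.

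\emph{The correspondence $\sH\leftrightarrow C_4$.} The crux is $\bPsi\qw(\IGoo)\subseteq\IOoo$, and it rests on the observation that the single forbidden poset $\sH$ for interval orders corresponds under $\bPsi$ to the forbidden graph $C_4$ for interval graphs. Indeed the comparability graph $\Psi(\sH)$ is $2K_2$, so $\bPsi(\sH)=\overline{2K_2}=C_4$, and a four-point subset of a finite poset $P$ induces $\sH$ exactly when the same four vertices induce $C_4$ in $\bPsi(P)$. Using $\tind(F,\overline G)=\tind(\overline F,G)$ and the identity $\tind(F,\Psi(P))=\sum_i\tind(F_i,P)$ from the lemma above --- the relevant poset-orientations $F_i$ of $\overline{C_4}=2K_2$ all being copies of $\sH$ --- I would obtain, for every finite poset $P$, that $\tind(C_4,\bPsi(P))$ is a fixed positive multiple of $\tind(\sH,P)$. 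Both sides are continuous functionals on $\cpq$ and agree on the dense set $\cP$, so the relation persists everywhere:
\begin{equation*}
\tind(C_4,\bPsi(\Pi))=c\,\tind(\sH,\Pi),\qquad \Pi\in\cpq,\ \ c>0.
\end{equation*}

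\emph{Conclusion and the main obstacle.} Given $\Pi\in\cpq$ with $\bPsi(\Pi)\in\IGoo$, first $\Pi\in\cpoo$ because $\bPsi(\Pi)$ is a genuine graph limit and $\bPsi$ preserves finiteness. Since every finite interval graph is $C_4$-free, the continuous functional $\tind(C_4,\cdot)$ vanishes on $\IG$ and hence on its closure $\bIG\supseteq\IGoo$; thus $\tind(C_4,\bPsi(\Pi))=0$, and the displayed identity gives $\tind(\sH,\Pi)=0$. By \refT{TintervalH} this says precisely that $\Pi\in\IOoo$, proving $\bPsi\qw(\IGoo)\subseteq\IOoo$; the reverse inclusion is contained in $\bPsi(\IOoo)=\IGoo$. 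The step I would watch most carefully is the passage from the finite identity to its version on all of $\cpq$, and the conceptual point worth stressing is that no full forbidden-subgraph description of interval graph \emph{limits} is needed: the asymmetry is absorbed by invoking the \emph{complete} characterisation of interval orders through the single poset $\sH$ (\refT{TintervalH}) on the poset side, while on the graph side only the elementary fact that $C_4$ is a forbidden induced subgraph of interval graphs is used.
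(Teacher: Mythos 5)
Your proposal is correct and follows essentially the same route as the paper: soft compactness/continuity/density arguments for the two surjectivity claims, and then the correspondence $\sH=\overline{\sC_4}$ combined with \refT{TintervalH} to get $\bPsi\qw(\IGoo)\subseteq\IOoo$. The only cosmetic difference is that the paper uses the one-sided bound $\tind(\sH,\Pi)\le\tind(\bsC_4,\Psi(\Pi))$ rather than your exact identity $\tind(\sC_4,\bPsi(\Pi))=c\,\tind(\sH,\Pi)$ (where in fact $c=1$), but both are justified the same way and yield the same conclusion.
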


\begin{proof}
  Since $\bPsi:\IO\to\IG$, by continuity $\bPsi:\bIO\to\bIG$. Moreover,
the range is compact, since $\bIO$ is compact and $\bPsi$ is continuous, and
also 
dense in $\bIG$ since it contains $\bPsi(\IO)=\IG$; hence the range 
$\bPsi(\bIO)=\bIG$.
Since $\bPsi(\IO)\subseteq\IG$ and $\bPsi(\IOoo)\subseteq\IGoo$,
it follows that $\bPsi(\IOoo)=\IGoo$.

If $\Pi\in\cpq$ and $\bPsi(\Pi)\in\IGoo$, then
$\Pi\in\cpoo$. Furthermore, as an undirected graph,
$\sH=\bsC_4$, and thus
\begin{equation*}
\tind(\sH,\Pi)\le \tind(\bsC_4,\Psi(\Pi))=\tind(\sC_4,\bPsi(\Pi))=0.  
\end{equation*}
Hence, $\Pi\in\IOoo$ by \refT{TintervalH}.
\end{proof}

Using the representations of interval order limits and interval graph limits
by measures $\mu\in\sP(\sssi)$ in \refT{Tinterval} and \cite{SJ254}, the map
$\bPsi:\IOoo\to\IGoo$ is given by $\bPsi(\Pi_\mu)=\gG_\mu$, 
since $\bPsi(P(n,\Pi_\mu))=G(n,\gG_\mu)$ 
as is seen from the definitions of the random posets $P(n,\Pi_\mu)$ and the
random graphs $G(n,\gG_\mu)$.

The surjection $\bPsi:\IO\to\IG$ is not a bijection. If $G$ is a labelled
interval graph, and $G$ is not complete,
then there are always at least two interval orders on the
vertex set that yield the graph $G$, since we may reverse any such
order;
these orders are sometimes, but not always equivalent as unlabelled
posets.
(For example, if $G$ is an empty graph on $[n]$, then any total order of
$[n]$ will do, but they all yield the same unlabelled poset.)
A simple example of non-uniqueness is by taking $G$ to be the disjoint sum
$\sK_1+\sK_2+\sK_3$; then $\bPsi\qw(G)$ consists of the 6 (weak) orders
obtained by taking the three vertex sets of the subgraphs
$\sK_1,\sK_2,\sK_3$ in any order, letting the vertices inside each set be
incomparable to each other. 
See further \cite[Section 3.6]{Fishburn} (labelled case) and
\cite{Hanlon} (unlabelled case); these references contain among other
results characterizations of interval graphs $G$ such that $\bPsi\qw(G)$
contains only one poset, or two poset with opposite orders.

\begin{problem}\label{Pinterval}
Describe $\bPsi\qw(\gG)$ for a general interval graph limit $\gG$.
In particular, 
  characterize the interval graph limits $\gG$ such that 
$\bPsi\qw(\gG)$ consists of a single poset limit, 
or  two poset limits  related by reflection.
\end{problem}

It seems likely that a solution to this problem could be combined with the
unique representation in \refT{Tunique} to yield a unique representation
of interval graph limits by some class of kernels.

\subsection{Semiorders}

$P$ is a semiorder if and only if $\bPsi(P)$ is a unit interval graph (a.k.a.\
\emph{indifference graph}); moreover, every unit interval graph is $\bPsi(P)$
for some semiorder $P$
\cite[Theorem 3.2]{Fishburn}.
Thus, $\bPsi$ maps $\SO$ onto the set $\UIG$ of unit interval graphs.
Let $\bUIG$ be the closure of $\UIG$ in $\cuq$ and let
$\UIGoo\=\bUIG\setminus\UIG$ be the set of unit interval graph limits.

\begin{theorem}\label{TSOUIG}
  $\bPsi$ maps $\bSO$ onto $\bUIG$ and $\SOoo$ onto $\UIGoo$. 
Moreover, $\bPsi\qw(\UIGoo)=\SOoo$, \ie, if\/
$\Pi\in\cpq$ and\/ $\bPsi(\Pi)$ is a unit interval graph limit, then $\Pi$ is a
semiorder limit.
\end{theorem}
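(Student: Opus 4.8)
The plan is to follow the proof of \refT{TIOIG} line for line, replacing interval graphs by unit interval graphs and adding the forbidden configuration coming from $\sL$. For surjectivity, I would first note that $\bPsi$ is continuous and maps $\SO$ onto $\UIG$, so its extension satisfies $\bPsi(\bSO)\subseteq\bUIG$; the image $\bPsi(\bSO)$ is compact (the continuous image of the compact set $\bSO$) and contains the dense set $\bPsi(\SO)=\UIG$, whence $\bPsi(\bSO)=\bUIG$. To pass to limits I would use that $\cU$ is open and discrete in $\cuq$, so $\bUIG$ contains no finite graphs beyond those in $\UIG$ and therefore $\UIGoo=\bUIG\cap\cuoo$. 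Combined with $\bPsi(\SO)\subseteq\UIG$ and the fact, from the comparability-graph lemma of this section, that $\bPsi$ carries $\cpoo$ into $\cuoo$, this gives $\bPsi(\SOoo)\subseteq\UIGoo$; conversely any $\gamma\in\UIGoo\subseteq\bUIG=\bPsi(\bSO)$ is $\bPsi(\Pi)$ for some $\Pi\in\bSO$, and $\Pi$ cannot be a finite semiorder (else $\gamma$ would be finite), so $\Pi\in\SOoo$ and $\bPsi(\SOoo)=\UIGoo$.

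The substantive part is the identity $\bPsi\qw(\UIGoo)=\SOoo$. Given $\Pi\in\cpq$ with $\bPsi(\Pi)\in\UIGoo$, I would first observe that $\bPsi$ sends finite posets to finite graphs, so $\bPsi(\Pi)\in\cuoo$ forces $\Pi\in\cpoo$. It then suffices, by \refT{TsemiorderHL}, to show $\tind(\sH,\Pi)=\tind(\sL,\Pi)=0$. The $\sH$ part is exactly as in \refT{TIOIG}: as undirected graphs $\Psi(\sH)=\bsC_4$, so $\tind(\sH,\Pi)\le\tind(\bsC_4,\Psi(\Pi))=\tind(\sC_4,\bPsi(\Pi))$, which vanishes because $\sC_4$ is not a unit interval graph and $\tind$ is continuous. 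For $\sL$ I would compute its comparability graph: the chain $1<2<3$ gives a triangle and $4$ is isolated, so $\Psi(\sL)=\sK_3+\sK_1$, whose complement is the claw $\sK_{1,3}$. Hence $\tind(\sL,\Pi)\le\tind(\sK_3+\sK_1,\Psi(\Pi))=\tind(\sK_{1,3},\bPsi(\Pi))$, and this is again $0$ because the claw is not a unit interval graph. With both functionals zero, \refT{TsemiorderHL} gives $\Pi\in\SOoo$.

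The vanishing of these two functionals rests on the fact that unit interval graphs form a \emph{hereditary} class: since neither $\sC_4$ nor the claw $\sK_{1,3}$ is a unit interval graph, no unit interval graph contains either as an induced subgraph, so $\tind(\sC_4,\cdot)=\tind(\sK_{1,3},\cdot)=0$ on $\UIG$ and hence, by continuity, on the limit $\bPsi(\Pi)\in\UIGoo$. The step I expect to be the crux is the $\sL$ computation, since it is the genuinely new ingredient relative to the interval-order case; it relies on the classical characterization (Roberts' theorem, see \cite{Fishburn}) that unit interval graphs are exactly the claw-free interval graphs, and in particular that $\sK_{1,3}$ itself is not one. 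The remaining care is purely bookkeeping — keeping the finite objects separate from the limits — which is handled by the openness of $\cU$ in $\cuq$ as above; everything else is the same soft argument (compactness, density, continuity of $\tind$ and of $\bPsi$) used for \refT{TIOIG}.
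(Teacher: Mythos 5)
Your proof is correct and is exactly the argument the paper intends: its proof of \refT{TSOUIG} is literally ``as the proof of \refT{TIOIG}, \emph{mutatis mutandis}, now using \refT{TsemiorderHL}'', and your write-up supplies precisely those mutations — the same compactness/density argument for surjectivity, the same $\sC_4$ bound for $\sH$, plus the new computation $\Psi(\sL)=\sK_3+\sK_1$, $\bPsi(\sL)=\sK_{1,3}$, with the claw excluded from unit interval graphs by Roberts' characterization.
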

\begin{proof}
As the proof of \refT{TIOIG},
\emph{mutatis mutandis}; 
now using \refT{TsemiorderHL}. 
\end{proof}

 \refP{Pinterval} seems considerably easier for unit interval graph limits in
view of the results for unit interval graphs in \cite{Hanlon,Fishburn}. 
In particular, if $G$ is a connected unit interval graph, then $\bPsi\qw(G)$
consists of just one or two posets $P$ and $P\refl$ (which may be the same
or not, as unlabelled posets) \cite[Theorem 3.10]{Fishburn}. 
This leads to the following conjecture.

\begin{conjecture}
If\/ $\gG$ is a connected unit interval graph limit, then $\bPsi\qw(\gG)$
consists of either one semiorder limit $\Pi$ (with $\Pi\refl=\Pi$) or two
semiorder limits $\Pi$ and $\Pi\refl$. 
\end{conjecture}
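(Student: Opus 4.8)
The plan is to prove the two inclusions separately, one of which is immediate. Since $\bPsi$ maps $\SOoo$ onto $\UIGoo$ (\refT{TSOUIG}), there is at least one semiorder limit $\Pi$ with $\bPsi(\Pi)=\gG$; and since reflection leaves the comparability graph, hence its complement, unchanged, we have $\bPsi(\Pi\refl)=\bPsi(\Pi)=\gG$ as well. Thus $\set{\Pi,\Pi\refl}\subseteq\bPsi\qw(\gG)$, a set of one element (when $\Pi\refl=\Pi$) or two. The entire content of the statement is therefore the reverse inclusion: every semiorder limit $\Pi'$ with $\bPsi(\Pi')=\gG$ satisfies $\Pi'\in\set{\Pi,\Pi\refl}$.

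To prove this I would pass to the infinite random objects. By \refT{Tsemi} write $\Pi=\Pi_g$ and $\Pi'=\Pi_{g'}$ with $g,g'\in\cgrc$, and realize $P(\infty,\Pi_g)$ by \iid{} intervals $[X_i,g(X_i)]$, $X_i\sim\uoi$; then $\bPsi\bigpar{P(\infty,\Pi_g)}$ is exactly the infinite random unit interval graph whose edges are the overlapping pairs of intervals, with kernel $U_g(x,y)=\ett{\max(x,y)\le g(\min(x,y))}$ on $(\oi,\gl)$. The hypothesis $\bPsi(\Pi_g)=\bPsi(\Pi_{g'})$ says precisely that these two infinite random graphs have the same distribution. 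The key step is a \emph{reconstruction} statement: when $\gG$ is connected, the unlabelled infinite random unit interval graph determines the underlying semiordered point configuration up to a single global reversal, so that one recovers $P(\infty,\Pi_g)$ up to isomorphism and reflection. Granting this, equality of the graph distributions forces $P(\infty,\Pi_g)\eqd P(\infty,\Pi_{g'})$ or $P(\infty,\Pi_g)\eqd P(\infty,\Pi_{g'}\refl)$, and since $P(\infty,\cdot)$ characterizes a poset limit \cite[Theorem 1.16]{SJ224} this yields $\Pi_{g'}\in\set{\Pi_g,\Pi_g\refl}$, as required.

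The reconstruction I would build on the finite theory. By \cite[Theorem 3.10]{Fishburn} a \emph{connected} finite unit interval graph $G$ has an essentially unique indifference (consecutive-clique) ordering of its vertices up to reversal, so $\bPsi\qw(G)$ is one poset or a poset and its reflection. I would apply this to the finite induced subgraphs $\bPsi(P(n,\Pi_g))$ and argue that the left-to-right orders they induce are asymptotically consistent, producing in the limit a linear order on the points that is determined up to reversal; together with the overlap information carried by the graph, this linear order recovers which pairs of intervals are disjoint and in which direction, i.e.\ the poset $P(\infty,\Pi_g)$ up to reflection. Connectivity enters exactly here: in the $g$-representation it amounts to $g$ not meeting the diagonal inside $\oiq$ (no $c\in\oiq$ with $g(c-)=c$), which is what prevents the interval system from splitting into two independently reorderable blocks, each contributing its own reflection ambiguity.

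The main obstacle is precisely this passage from the finite reconstruction to the limit. The finite induced subgraphs are only \emph{almost surely} connected, since a positive-measure near-gap may fail to be crossed by a finite sample, so one must show that the finitely many spurious components, and the independent reflection freedoms they create, wash out in the limit and leave only a single \emph{global} reversal. Making the asymptotic consistency of the consecutive-clique orderings rigorous — an almost sure, measurable reconstruction of a rigid linear order from the random overlap graph under the connectivity hypothesis — is the crux; everything else is bookkeeping with the correspondences in \refT{Tsemi} and \cite[Theorem 1.16]{SJ224}.
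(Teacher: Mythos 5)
You have not proved the statement, and neither does the paper: this is stated as an open \emph{conjecture} (the paper immediately afterwards says ``If this conjecture would be proven, it would \dots lead to \dots''), so there is no proof in the paper to match yours against. Your first paragraph is correct but is the trivial half: surjectivity of $\bPsi:\SOoo\to\UIGoo$ (\refT{TSOUIG}) together with the invariance of the comparability graph under reflection gives $\set{\Pi,\Pi\refl}\subseteq\bPsi\qw(\gG)$, and your reduction of the converse, via $P(\infty,\cdot)$ and the fact that $P(\infty,\Pi)$ determines $\Pi$ \cite[Theorem 1.16]{SJ224}, to a statement about the infinite random unit interval graph is a reasonable plan. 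But the entire mathematical content of the conjecture is the ``reconstruction'' step that you state and then defer: that for connected $\gG$ the \emph{distribution} of the unlabelled infinite random graph determines the sampled semiorder up to one global reversal. You do not prove this; you only explain why it is plausible, and you yourself identify the obstruction (finite samples of a connected graphon need not be connected, so Fishburn's finite uniqueness theorem only gives, for each $n$, a reversal freedom \emph{per component}, and nothing in your argument controls how these freedoms cohere as $n\to\infty$). Declaring that this ``washes out in the limit'' is an assertion of the conjecture, not an argument for it.

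Two further points show the gap is genuine and not mere bookkeeping. First, the hypothesis $\bPsi(\Pi_g)=\bPsi(\Pi_{g'})$ gives only equality in distribution of the two infinite random graphs, not a coupling; to conclude $P(\infty,\Pi_g)\eqd P(\infty,\Pi_{g'})$ or $P(\infty,\Pi_g)\eqd P(\infty,\Pi_{g'}\refl)$ you need the reconstruction to be an a.s.-defined, isomorphism-invariant measurable map from unlabelled graphs to (poset, or poset-up-to-reflection) data, and the existence of such a map is precisely what must be established. Second, the real danger is not only spurious finite components: a priori the kernels of $g$ and $g'$ could be carried into one another by a measure preserving rearrangement of $\oi$ that is neither globally order preserving nor globally order reversing (for disconnected $\gG$ such rearrangements do exist, block by block, which is why connectivity is assumed); ruling these out under connectivity of the \emph{limit} object is exactly the open problem, and your sketch offers no mechanism for it. So the proposal is a sensible research plan, consistent with the heuristics the paper itself gives (the analogy with \cite[Theorem 3.10]{Fishburn}), but it is not a proof.
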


If this conjecture would be proven, it would by \refT{Tsemi} lead to a result on
characterization of
representations of unit interval graph limits, and perhaps to a way of
selecting unique or almost unique such representations.

\newcommand\AAP{\emph{Adv. Appl. Probab.} }
\newcommand\JAP{\emph{J. Appl. Probab.} }
\newcommand\JAMS{\emph{J. \AMS} }
\newcommand\MAMS{\emph{Memoirs \AMS} }
\newcommand\PAMS{\emph{Proc. \AMS} }
\newcommand\TAMS{\emph{Trans. \AMS} }
\newcommand\AnnMS{\emph{Ann. Math. Statist.} }
\newcommand\AnnPr{\emph{Ann. Probab.} }
\newcommand\CPC{\emph{Combin. Probab. Comput.} }
\newcommand\JMAA{\emph{J. Math. Anal. Appl.} }
\newcommand\RSA{\emph{Random Struct. Alg.} }
\newcommand\ZW{\emph{Z. Wahrsch. Verw. Gebiete} }
\newcommand\DMTCS{\jour{Discr. Math. Theor. Comput. Sci.} }

\newcommand\AMS{Amer. Math. Soc.}
\newcommand\Springer{Springer}
\newcommand\Wiley{Wiley}

\newcommand\vol{\textbf}
\newcommand\jour{\emph}
\newcommand\book{\emph}
\newcommand\inbook{\emph}
\def\no#1#2,{\unskip#2, no. #1,} 
\newcommand\toappear{\unskip, to appear}

\newcommand\webcite[1]{
\texttt{\def~{{\tiny$\sim$}}#1}\hfill\hfill}
\newcommand\webcitesvante{\webcite{http://www.math.uu.se/~svante/papers/}}
\newcommand\arxiv[1]{\webcite{arXiv:#1.}}

\def\nobibitem#1\par{}

\end{document}